\documentclass[11pt,leqno,twoside]{amsart}
\usepackage{mathpazo}
\usepackage{geometry}
\usepackage{tikz-cd}
\usepackage{graphicx} 
\usepackage{amsmath}
\usepackage{amssymb}
\usepackage{extarrows}
\usepackage{csquotes}
\usepackage{comment}
\usepackage{mathrsfs}
\usepackage{amsthm}
\usepackage{enumitem}
\usepackage{color}
\usepackage{tikz}
\usepackage[colorlinks=true, linkcolor=blue, urlcolor=blue, citecolor=blue]{hyperref}
\newtheoremstyle{plainsl}
{8pt plus 2pt minus 4pt}
{8pt plus 2pt minus 4pt}
{\slshape}
{0pt}
{\bfseries}
{.}
{5pt plus 1pt minus 1pt}
{}
\theoremstyle{plainsl}
\newtheorem{theorem}{Theorem}[section]

\newtheorem{prop*}{Proposition}

\newtheorem{lemma}[theorem]{Lemma}
\newtheorem{corollary}[theorem]{Corollary}

\newtheorem{remark}[theorem]{Remark}
\theoremstyle{definition}
\newtheorem{definition}[theorem]{Definition}

\def\ba#1\ea{\begin{align*}#1\end{align*}}

\newcommand{\C}{\mathbb{C}}
\newcommand{\Z}{\mathbb{Z}}
\newcommand{\Q}{\mathbb{Q}}
\newcommand{\R}{\mathbb{R}}

\newcommand{\bD}{\mathbf{D}}

\newcommand{\sA}{\mathscr{A}}

\newcommand{\sR}{\mathscr{R}}

\title[Euler characteristic operator]{Triviality of \(K\)-homology classes of Euler characteristic operators of  stratified pseudomanifolds}
\author{Mohammadali Aligholi}
\address{M. Aligholi: Department of Mathematics, University of Wisconsin-Madison, 480 Lincoln Drive, Madison WI 53706-1388, USA}
\email{aligholi@wisc.edu}
\date{}
\begin{document}
\raggedbottom

\begin{abstract}

We show that the $K$-homology classes of the Euler characteristic operators associated with the minimal and maximal ideal boundary conditions of the de Rham differential on the regular part of a connected compact Thom–Mather stratified pseudomanifold, equipped with an adapted metric, are trivial beyond their indices. These indices are the Euler characteristics of the corresponding $L^2$-cohomologies. This generalizes a result of J. Rosenberg for manifolds. The proof uses Witten deformation of the de Rham operator associated with a stratified Morse function, following the work of Álvarez López and Calaza.

\end{abstract}
\subjclass[2020]{19K33, 19K35, 55N33, 58A12, 58A35}
\keywords{de Rham operator, Euler characteristic, analytic $K$-homology, stratified Morse function}
\setcounter{tocdepth}{3}
\maketitle
\section{Introduction}

Let $M$ be a closed oriented real manifold.   The de Rham operator $D=d+d^*$ acting on differential forms of $M$ is a formally self-adjoint operator and is essentially self-adjoint on the Hilbert space of $L^2$ sections of differential forms on $M$. There are two natural gradings on the bundle of differential forms: with the grading by parity of degree, denoted by $\epsilon$, $D$ becomes the Euler characteristic operator, and with the nontrivial signature grading, denoted by $\tau$,  defined by the Hodge $\star$ operator, $D$ is the signature operator when $M$ is even-dimensional.

The indices of these two operators are two essential topological invariants of $M$, respectively the Euler characteristic $\chi(    M)$, which can be defined as the alternating sum of Betti numbers of $M$, and the signature of $M$, defined as the signature of the pairing induced by Poincaré duality on singular cohomology (assuming $\dim M$ is divisible by 4, the pairing in middle-dimensional cohomology is symmetric; in other dimensions the signature is zero). The signature is richer than the Euler characteristic as it remembers some data of the Poincaré pairing. The signature can be lifted to characteristic classes called L-classes, which historically are used in surgery theory and classification of higher-dimensional manifolds by Browder and Novikov, and the famous Novikov conjecture concerns those.

By Kasparov theory, the Euler characteristic and signature operator give rise to $K$-homology classes, $$[D^{dR}_M]\in K^{an}_0(M)=KK_0(C_0(M),\C),\quad [D^{sign}_M]\in K^{an}_{n}(M)=KK_n(C_0(M),\C)$$ where $n=\dim M$ mod 2.  There is a homological Chern character (which is defined for any finite CW complex $M$)
$$Ch: K^{an}_*(M)\rightarrow H_{*}(M,\mathbb{Q})$$
When $M$ is compact and connected, its degree-zero component
$Ch_0([A])\in H_0(M,\mathbb{Q})\cong\mathbb{Q}$,
for a class represented by a graded Dirac operator $A$,
is identified with its graded index. By $K$-theoretic Atiyah-Singer theory \cite{A,AS3}, the Chern character of $[D^{sign}_M]$ is the Poincare dual of cohomological Atiyah-Singer L-classes, which differs from the usual Hirzebruch homological L-classes by powers of 2. Interestingly, the $K$-homology class of the Euler characteristic operator $[D^{dR}_M]$ is trivial beyond its index.
\begin{theorem} (Rosenberg \cite{R})
    Let $M$ be a connected manifold (either compact or non-compact). The analytic $K$-homology class of $D_M^{dR}$, defined by any complete Riemannian metric, is trivial beyond its index, in the sense that it is in the image of $\mathbb{Z}\cong K^{an}_0(pt)\rightarrow K^{an}_0(M)$ (induced by the inclusion of a basepoint into $M$).
\end{theorem}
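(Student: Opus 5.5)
We deform the Euler characteristic operator by a vector field so that its $K$-homology class becomes supported at finitely many points, and then use connectedness of $M$ to slide all of those points to the basepoint. This is the strategy suggested by the Witten-deformation picture in the abstract.

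\textbf{Step 1: deformation.} Choose a smooth vector field $X$ on $M$ with isolated nondegenerate zeros $p_1,\dots,p_k$. For instance, take $X=\nabla f$ for a proper Morse function $f$, or in the noncompact case a field that is bounded away from zero outside a compact set. Consider the Clifford multiplications $\hat c(v)=v^\flat\wedge+\iota_v$. These anticommute with $c(v)=v^\flat\wedge-\iota_v$, and $D=d+d^*$ is built from $c$. Set $D_t=D+t\,\hat c(X)$. Then
\begin{equation*}
D_t^2=D^2+t\,A+t^2|X|^2 ,
\end{equation*}
where $A$ is a zeroth-order endomorphism, namely the Witten/Bismut--Zhang term coming from $[D,\hat c(X)]$. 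Each $D_t$ is odd for the parity grading $\epsilon$ and has finite propagation. The path $t\mapsto D_t$ is a bounded-perturbation homotopy in the relevant sense; on a complete manifold we would use that $\hat c(X)$ is bounded after normalizing $X$, or apply the Higson--Roe localization argument. Hence all the $D_t$ define the same class $[D^{dR}_M]\in K_0^{an}(M)$.

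\textbf{Step 2: localization.} As $t\to\infty$, the term $t^2|X|^2$ makes the operator invertible away from the zeros of $X$. In Kasparov language, the class decomposes as a sum of classes supported in small balls $B_i$ around the $p_i$. This is the same mechanism as in the proof of Poincaré--Hopf by Witten deformation. Concretely, one can argue either with Kasparov's excision or with the Roe-algebra localization lemma: an operator that is invertible outside a compact set $K$ defines a class pushed forward from a neighbourhood of $K$. In a neighbourhood of each $p_i$, the model operator is the harmonic-oscillator type operator on $\R^n$. Its class in $K_0^{an}(B_i)\cong K_0^{an}(pt)$, obtained via the inclusion of the point $p_i$, is $\pm1$, with sign given by the index of the zero. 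This identification is the standard Bott/harmonic-oscillator computation: the model has a one-dimensional kernel in degree equal to the Morse index.

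\textbf{Step 3: conclusion.} We now have
\begin{equation*}
[D^{dR}_M]=\sum_i \pm (\iota_{p_i})_*[1].
\end{equation*}
Since $M$ is connected, each $\iota_{p_i}$ is homotopic to the inclusion of the basepoint, and $K$-homology is homotopy invariant. So the class equals $m\cdot(\iota_{pt})_*[1]$, where $m$ is the index, which is $\chi(M)$ in the compact case. It therefore lies in the image of $\Z$.

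\textbf{Main obstacle.} The hardest part is Step 2, in particular making the localization rigorous in the noncompact case. There we need the perturbation to dominate at infinity uniformly; this is a choice of $X$ with $|X|$ bounded below off a compact set, and the Bismut--Zhang term $A$ must be controlled relative to $t^2|X|^2$. It may be cleaner to rescale $X$ so that $|\nabla X|\le C|X|^2$ near infinity, or to reduce to the compact case via the Higson--Roe relative index argument.
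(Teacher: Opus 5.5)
Your skeleton matches the route the paper takes from Rosenberg: deform by $\hat c(X)$, localize at the zeros, contract the neighbourhoods to points, and use connectedness. But Step 2 carries the whole content of the theorem, it is not proved, and the principle you invoke for it is false for $K$-homology classes. The statement ``invertible outside $K$ implies the class is pushed forward from a neighbourhood of $K$'' is a statement about the index, or the coarse index in $K_*(C^*(M))$. That is exactly what the Poincar\'e--Hopf/Witten argument localizes, but it says nothing about the class in $K_0^{an}(M)$. For instance, the spin Dirac operator on the round $S^{2k}$ is invertible by Lichnerowicz. Yet its class in $K_0^{an}(S^{2k})$ is the $K$-theoretic fundamental class, which is not a multiple of a point class. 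So a spectral gap for $D_t$ on $U=M\setminus\bigcup_i B_i$ at fixed $t$ does not by itself give $j^*[D]=0$. Neither ``Kasparov's excision'' nor the Roe-algebra lemma supplies this step.

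What is missing is the degeneration argument of Lemmas \ref{lemm1}, \ref{lemma3} and \ref{lemma2}. Because $\hat c(X)$ is a bundle endomorphism commuting with functions, $[D_t,g]=[D,g]$ is bounded uniformly in $t$ for Lipschitz $g$. Meanwhile the gap $\|D_ts\|^2\ge Ct\|s\|^2$ for $s$ supported in $U$ grows with $t$. Together these give $\|g(1+D_t^2)^{-1/2}\|=O(t^{-1/2})$ and $\|[F_t,g]\|\to0$ for $g\in C_0(U)$, and also $F_t\to\hat c(X)/|X|$ strongly. The limit is a self-adjoint involution commuting with $C_0(U)$, hence a degenerate cycle, so $j^*[D]=0$. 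Exactness of $K_0^{an}(\bigcup_i\overline{B_i})\to K_0^{an}(M)\to K_0^{an}(U)$ then puts $[D]$ in the image of $\bigoplus_i\Z$.

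This only needs the existence of a preimage, so your harmonic-oscillator computation of the local contributions $\pm1$ is unnecessary. It would also need a relative index theorem just to define ``the class of the model operator in $K_0^{an}(B_i)$''.

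Separately, the noncompact case is left open in your proposal. A proper Morse function can have infinitely many critical points, contradicting your finite list $p_1,\dots,p_k$. Rescaling $X$ to get $|\nabla X|\le C|X|^2$ may make $|X|$ unbounded, which conflicts with the boundedness of $\hat c(X)$ you rely on in Step 1.
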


Rosenberg's theorem is formulated in real $KO$-homology. Here we work with complex $K$-homology.

The goal of this paper is to prove the analogue of the above result for compact Thom–Mather stratified pseudomanifolds.

The analytic study of pseudomanifolds started with the foundational work of Cheeger \cite{Ch} by equipping the regular part with a piecewise flat conical metric. In this paper we use an \textit{adapted metric} as in the terminology of  \cite{BHS,AC} (Definition \ref{adapteddef}), which can be put on any stratum of a Thom-Mather stratified space. Such metrics are quasi-isometric to metrics of piecewise-flat conic type on the regular part considered by Cheeger \cite{Ch}.

On a pseudomanifold there are different (Hilbert-complex) closed extensions  of  de Rham differential $d$, called ideal boundary conditions, initially defined as an unbounded operator on the Hilbert space of  $L^2$  differential forms on the regular part $\mathcal{H}:=L^2(\Omega^*(X_{reg}))$. There are minimal and maximal extensions of $d$, denoted by $d_{min}, d_{max}$.  Then we may form de Rham operators $$D_{rel}:=d_{min}+(d_{min})^*,\quad D_{abs}:=d_{max}+(d_{max})^* $$ Equipping $\mathcal{H}$ with the parity-of-degree grading, $D_{rel},D_{abs}$ are  odd operators, which are also called Euler characteristic operators.  They give rise to $K$-homology classes in $K^{an}_0(X)$ denoted by $[D^{dR}_{X,rel}]$ and  $[D^{dR}_{X,abs}]$.

\begin{theorem}\label{definingclassKhomology}
Let $X$ be a  compact Thom–Mather stratified pseudomanifold. The Euler characteristic operators $D_{rel/abs}$ of $X$ endowed with an adapted metric, acting on $L^2(\Omega^*(X_{reg}))$ equipped with the usual parity grading, define classes $[D^{dR}_{X,rel}],[D^{dR}_{X,abs}]\in K^{an}_0(X)$. These classes do not depend on the choice of the  adapted metric.
\end{theorem}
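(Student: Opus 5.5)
To show that $(\mathcal{H}, \rho, D_{rel/abs})$ defines a class in $K^{an}_0(X)=KK_0(C(X),\C)$, I would verify the axioms of an unbounded Fredholm module (spectral triple) and then invoke Baaj--Julg to pass to the bounded transform $F=D(1+D^2)^{-1/2}$. Here $\rho$ is the representation of $C(X)$ on $\mathcal{H}=L^2(\Omega^*(X_{reg}))$ by pointwise multiplication, restricted to $X_{reg}$. This is a faithful even representation: $X_{reg}$ is dense and the singular set has measure zero.

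\textbf{Verification of the axioms.} There are three things to check.

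\emph{Self-adjointness and oddness.} Since $d_{min}$ and $d_{max}$ are closed extensions with $d^2=0$ on their domains (Hilbert complexes), $D=d_{\bullet}+d_{\bullet}^*$ is self-adjoint on $\mathrm{dom}(d_\bullet)\cap\mathrm{dom}(d_\bullet^*)$. This is the standard fact for Hilbert complexes (Brüning--Lesch). $D$ is odd for the parity grading.

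\emph{Bounded commutators.} Let $\mathcal{A}\subset C(X)$ be the dense subalgebra of functions that are smooth on strata and Lipschitz with respect to the adapted metric. For instance, take functions that are controlled, i.e.\ constant along the radial directions of the tubes near the singular set. For $f\in\mathcal{A}$, multiplication by $f$ preserves $\mathrm{dom}(d_{min})$ and $\mathrm{dom}(d_{max})$: this follows from the Leibniz rule and approximation by compactly supported forms in the min case. Moreover $[d,f]=df\wedge$ and $[d^*,f]=-\iota_{\nabla f}$, both bounded since $|df|$ is bounded. Density of $\mathcal{A}$ in $C(X)$ follows from Stone--Weierstrass.

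\emph{Compact resolvent.} I need $(1+D^2)^{-1/2}$ to be compact, i.e.\ $\mathrm{dom}(D)\hookrightarrow\mathcal{H}$ compactly. This is the main obstacle. I would take it from the literature for adapted metrics: the results of Brüning--Hilsum--Schulze and Álvarez López--Calaza show that for adapted metrics $d_{min/max}$ have discrete spectrum, with $L^2$-cohomology finite dimensional and isomorphic to intersection cohomology. Alternatively, I would argue by induction on depth. Using a partition of unity subordinate to conical charts $\R^k\times cL$, the problem reduces to the cone $cL$ with adapted metric. There one separates variables and uses the inductive discreteness of the spectrum of $D_L$ to get a Rellich-type estimate, as in Cheeger. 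Since multiplication by $\rho(f)$ is then compact relative to $D$, we obtain a class in $KK_0(C(X),\C)$.

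\textbf{Independence of the metric.} Any two adapted metrics $g_0,g_1$ are quasi-isometric: each is quasi-isometric to the model metric of conic type. Moreover the straight-line path $g_t=(1-t)g_0+tg_1$ remains adapted, since the conditions are convex. Quasi-isometric metrics give equivalent $L^2$ norms on forms, so $\mathcal{H}$ is the same topological vector space, and the closures $d_{min},d_{max}$ are the same operators, with graph norms depending only on the quasi-isometry class. Only the adjoint changes: $d_{g_t}^*=A_t^{-1}d^*_{g_0}A_t$, where $A_t$ is the bounded positive invertible operator relating the inner products.

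Conjugating by the unitary $A_t^{1/2}:\mathcal{H}_{g_t}\to\mathcal{H}_{g_0}$, I would obtain a continuous family of operators on a fixed Hilbert space with a common core. The associated bounded transforms $F_t$ vary norm-continuously modulo compacts; one can use the Hilsum (or Kucerovsky) criterion, or directly the formula $F=\frac{2}{\pi}\int_0^\infty D(1+\lambda^2+D^2)^{-1}d\lambda$. This gives an operator homotopy, hence $[D_{g_0}]=[D_{g_1}]$. An alternative avoids continuity estimates altogether: since $d$ is fixed and only the Hilbert structure changes, the classes coincide by the stability of the $KK$-class of a Hilbert complex under equivalence of inner products, following Hilsum's argument for Lipschitz manifolds. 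The second step of this part—controlling compactness of $F_t-F_s$—is routine once compact resolvent is known uniformly, which again follows from the quasi-isometry.
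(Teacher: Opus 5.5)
Your proposal follows the paper's proof essentially step for step. The shared ingredients are Lipschitz functions as the dense subalgebra (they preserve the domains, and $[\bD,f]$ is Clifford multiplication by $df$), compact resolvent quoted from the adapted-metric literature as in Theorem~\ref{proptiesanalytic}(5), and metric independence via the convex path $g_s=(1-s)g_0+sg_1$, with $d_{min/max}$ fixed and only the adjoint $d_s^*$ varying, fed into Hilsum's homotopy argument.

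That argument actually uses norm continuity of $s\mapsto(\bD_s+i)^{-1}$ (Hilsum's Prop.~6.1), so you should drop two side remarks. The first is the common core after conjugating by $A_t^{1/2}$: adapted metrics give no control on derivatives of $A_t$, and $\Omega^*_c(X_{reg})$ is generally not a core for $D_{rel/abs}$. The second is compactness of $F_t-F_s$, which is false in general, since wherever $g_0,g_1$ fail to be pointwise conformal, the unitarily identified bounded transforms have different order-zero principal symbols on $X_{reg}$.
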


We note that if $X$ satisfies the Witt condition, then $d_{min}=d_{max}$, and hence $D_{rel}=D_{abs}$. In this case if, in addition, $X$ is oriented and even-dimensional and $\mathcal{H}$ is equipped with the signature grading $\tau$, the common operator $D_{rel}=D_{abs}$ defines the signature operator and gives rise to a $K$-homology class $[D^{sign}_X]\in K_{0}^{an}(X)$ whose Chern character differs from the Goresky–MacPherson L-class by certain powers of $2$; see \cite{ALMP,MW}.

\begin{theorem}\label{maintheorem}
    Let $X$ be a connected compact  Thom–Mather stratified pseudomanifold. Then
    $$[D^{dR}_{X,rel}], [D^{dR}_{X,abs}]\in K^{an}_0(X)$$ are in the image of $ K^{an}_0(pt)\rightarrow K^{an}_0(X)$ induced by the inclusion of a  point of $X$.
\end{theorem}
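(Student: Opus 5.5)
We follow the strategy suggested in the abstract: a Witten deformation of $D_{rel}$ (resp.\ $D_{abs}$) along a stratified Morse function, in the spirit of Álvarez López and Calaza. The aim is to show that the $K$-homology class is represented by a cycle that is, up to a compact and degenerate part, a finite direct sum of one-dimensional graded cycles supported at points. By connectedness, each point cycle equals the image of a generator of $K^{an}_0(pt)$ under the inclusion of the base point.

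\textbf{Step 1 (Morse function and rescaled metric).} Choose a rel-Morse (stratified Morse) function $f$ on $X$ whose critical points $x_1,\dots,x_N$ lie in strata. Choose an adapted metric $g$ which, near each $x_i$, is of the model form used by Álvarez López and Calaza: a product of a Euclidean factor along the stratum and an iterated cone. In this model $f$ splits as a Euclidean Morse quadratic form plus $\pm\rho^2/2$ in the cone radial variable. By Theorem \ref{definingclassKhomology} the class is independent of the adapted metric, so we may use this $g$.

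\textbf{Step 2 (deformation preserves the class).} Set $d_s=e^{-sf}de^{sf}$ and $D_s=d_{s,min}+d_{s,min}^*$ (resp.\ max). Since $e^{sf}$ is a bounded invertible Lipschitz multiplier, it preserves the domains of $d_{min}$ and $d_{max}$. Moreover $D_s-D=s(df\wedge+\iota_{\nabla f})$ is a bounded odd operator commuting with $C(X)$ up to zero, because it is a Clifford multiplication by a bounded form. Bounded perturbations of this type give operator homotopies of the bounded transforms $D_s(1+D_s^2)^{-1/2}$. Hence $[D_s]=[D]$ for all $s\ge 0$; this also needs compact resolvent and finite propagation, which follow from Step 1.

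\textbf{Step 3 (localization).} This is the main obstacle. We need the Bochner-type formula
$$D_s^2=D^2+s\,(\text{zeroth order})+s^2|df|^2,$$
which must hold on the appropriate domain near singular strata. Since $|df|^2$ is bounded below away from the $x_i$, standard IMS localization yields the following: the spectral subspace of $D_s$ with eigenvalues below $s^{1/4}$ is, for large $s$, spanned by forms concentrated near the $x_i$. These forms are approximated by the model ground states. In the model near $x_i$, the results of Álvarez López and Calaza identify the kernel of the model operator with the local $L^2$-cohomology of $(\text{cone link},\pm)$ tensored with a Gaussian. This kernel is finite dimensional.

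We then split the cycle $D_s$ into the small-eigenvalue part and its complement. On the complement, $D_s$ is invertible with large spectral gap, so it contributes a degenerate cycle. For the same reason the cross terms $[\phi,\cdot]$ act like compact perturbations. We expect the delicate point to be controlling commutators $[\phi,F_s]$ for $\phi\in C(X)$ uniformly. We would use finite propagation speed estimates, or the fact that $\|[\phi,D_s]\|$ is $s$-independent while the gap grows with $s$.

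\textbf{Step 4 (conclusion).} In the limit, the class equals that of the finite-dimensional graded module $V_i$ spanned by the approximate ground states localized at the $x_i$. $C(X)$ acts on $V_i$ up to compacts, hence up to homotopy, via evaluation at $x_i$. This cycle is therefore $\sum_i \operatorname{sdim}(V_i)\,\iota_{x_i*}[1]$. Since $X$ is connected, all $\iota_{x_i*}$ agree with the base point inclusion on $K_0(pt)$. Thus the class lies in the image of $K^{an}_0(pt)$, with coefficient $\sum_i \operatorname{sdim}V_i$, which is the $L^2$-Euler characteristic.
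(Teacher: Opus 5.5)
\textbf{There is a genuine gap, in Steps 3--4.} This is exactly where you need $K$-homological information rather than just index information.

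The spectral projection $P_s$ onto the small-eigenvalue subspace $E_s$ is finite rank and does not commute with multiplication by $C(X)$. So splitting the cycle into $E_s\oplus E_s^\perp$ is not a decomposition of Kasparov modules: neither summand carries a $*$-representation of $C(X)$.

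Your sentence ``$C(X)$ acts on $V_i$ up to compacts, hence up to homotopy, via evaluation at $x_i$'' is vacuous, since every operator on a finite-dimensional space is compact. No homotopy of cycles is given that replaces $\rho$ by $\phi\mapsto\phi(x_i)$ on $V_i$, or says what representation lives on the complement.

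Likewise, ``on the complement $D_s$ is invertible with a large gap, so it contributes a degenerate cycle'' is not a valid inference. A gap alone never makes a cycle degenerate: rescaling the Dirac operator on the round $S^2$ gives invertible operators with arbitrarily large gap, all representing the nontrivial fundamental class. What is actually needed is that $F_s$ asymptotically commutes with functions. Your second heuristic is the right mechanism: $[D_s,\phi]=[D,\phi]$ is $s$-independent while the gap grows. But the gap only holds on forms supported away from the critical points, so this controls only $\phi$ vanishing near the $x_i$.

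For general $\phi$ you would also need:
\begin{enumerate}
\item quantitative concentration of the eigenforms, e.g.\ $\|[P_s,\phi]\|\to 0$;
\item an honest homotopy over $[s_0,\infty]$, which is awkward because $P_s$ converges to $0$ only weakly.
\end{enumerate}
The heavy input from Álvarez López--Calaza's small-eigenvalue and local-model analysis does not by itself supply either of these.

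\textbf{How the paper's proof avoids this.} It never uses the local normal form, ground states, or any finite-dimensional reduction. It restricts the class to the open set $U=X\setminus\bigcup N_\epsilon(p_i)$. From the gap estimate $\|D_ts\|^2\ge Ct\|s\|^2$ for $s$ with essential support in $U_{reg}$, it proves, for $g\in C_0(U)$:
\begin{enumerate}
\item $\|g(1+D_t^2)^{-1/2}\|\to 0$;
\item $\|[F_t,g]\|\to 0$, via the resolvent integral and $[R(r,D_t),g]=-R(r,D_t)[D,g]R(r,D_t)$;
\item $F_t\to\widetilde{A}=A/|A|$ strongly.
\end{enumerate}
Since $\widetilde{A}$ is self-adjoint, satisfies $\widetilde{A}^2=1$, and commutes with $C_0(U)$, this is a homotopy over $C_0(U)$ to a degenerate cycle, so $j^*[D]=0$.

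The exact sequence $K^{an}_0(\bigcup N_\epsilon(p_i))\to K^{an}_0(X)\to K^{an}_0(U)$, contractibility of the $N_\epsilon(p_i)$, and connectedness of $X$ then finish the proof. Exactness replaces your Step 4 entirely, so the $C(X)$-compatibility problem never arises.

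A minor point: in Step 2 finite propagation is not needed, and compact resolvent comes from the discreteness of the spectrum of $D_{rel/abs}$, not from Step 1.
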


The graded indices of $D^{dR}_{X,rel/abs}$ are the Euler characteristics of  Hilbert complexes   $$\mathcal{L}^{\bullet}_{min/max}X:=\left(L^2(\Omega^\bullet(X_{reg})),d_{min/max}\right)$$ respectively. When $X$ is oriented, we have
\[H^*(\mathcal{L}^{\bullet}_{min}X)\cong IH^{*}_{\overline{m}}(X)\]
\[H^*(\mathcal{L}^{\bullet}_{max}X)\cong IH^{*}_{\underline{m}}(X)\]
 where   $\underline{m}, \overline{m}$ are lower-middle and upper-middle perversities, and  intersection cohomology is defined by dualizing the intersection-chain complex as in \cite{Be}. This is a result of Cheeger \cite{Ch} in the case of conic metrics. For a general compact  stratified pseudomanifold and a more general family of metrics (a weighted version of adapted metrics),  see a result of F. Bei \cite[Thm.~4]{Be}.

\begin{corollary} Let $X$ be a connected compact oriented Thom–Mather stratified pseudomanifold. Then
    \[Ch([D^{dR}_{X,rel}])=Ch_0([D^{dR}_{X,rel}])=index([D^{dR}_{X,rel}])= \chi(IH^{*}_{\overline{m}}(X))\in H_0(X,\Q)\]
      \[Ch([D^{dR}_{X,abs}])=Ch_0([D^{dR}_{X,abs}])=index([D^{dR}_{X,abs}])=\chi(IH^{*}_{\underline{m}}(X))\in H_0(X,\Q)\]
    where   $\underline{m}, \overline{m}$ are lower-middle and upper-middle perversities, $\chi$ denotes  Euler characteristic and is understood as $\chi[pt]\in H_0(X)$.
\end{corollary}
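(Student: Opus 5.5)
The corollary follows from Theorem \ref{maintheorem} by naturality of the homological Chern character together with the $L^2$-Hodge theorem identifying the index. By Theorem \ref{maintheorem}, $[D^{dR}_{X,rel}] = i_*(k)$ for some integer $k \in K^{an}_0(pt)\cong\Z$, where $i\colon pt\to X$ is the inclusion of a point. The same holds for the absolute class.

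First, naturality of $Ch$ with respect to the continuous map $i$ gives
\[
Ch([D^{dR}_{X,rel}]) = i_*\,Ch(k).
\]
On a point, $Ch\colon K^{an}_0(pt)\to H_*(pt,\Q)=H_0(pt,\Q)=\Q$ is the index map $k\mapsto k$. Hence $Ch([D^{dR}_{X,rel}])$ lies in $i_*H_0(pt,\Q)\subset H_0(X,\Q)$, and all components in positive degrees vanish. This gives the first equality. Since $X$ is connected, $i_*\colon H_0(pt,\Q)\to H_0(X,\Q)\cong\Q$ is an isomorphism sending $1$ to $[pt]$, independently of the chosen point. So the class is $k[pt]$.

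Second, we identify $k$ with the index. Push forward along the constant map $c\colon X\to pt$. Because $c\circ i=\mathrm{id}$, we get $c_*[D^{dR}_{X,rel}]=k$. On the other hand, $c_*$ of a $K$-homology class represented by a Fredholm module is its graded Fredholm index. Here $D_{rel}$ has compact resolvent; this is part of the content of Theorem \ref{definingclassKhomology} and comes from Cheeger/Bei. Its graded index is $\dim\ker D_{rel}^{even}-\dim\ker D_{rel}^{odd}$. This also agrees with the statement in the introduction that $Ch_0$ computes the graded index. Thus $k=\mathrm{index}(D_{rel})$, and likewise for the absolute operator.

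Third, we compute the index via Hodge theory for the Hilbert complex $\mathcal{L}^\bullet_{min}X$. Since $D_{rel}$ has compact resolvent, the complex is Fredholm with closed range. The weak Kodaira decomposition then gives
\[
\ker D_{rel}\cap L^2\Omega^j \cong H^j(\mathcal{L}^\bullet_{min}X).
\]
Consequently
\[
\mathrm{index}(D_{rel})=\sum_j(-1)^j\dim H^j(\mathcal{L}^\bullet_{min}X)=\chi\bigl(IH^*_{\overline{m}}(X)\bigr),
\]
using the cited isomorphism of Cheeger and Bei \cite[Thm.~4]{Be}, which is where orientation is needed. The maximal case is identical, with $\underline{m}$ in place of $\overline{m}$.

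The only step requiring care is the second: checking that the pushforward to a point of the Kasparov class agrees with the graded Fredholm index of the unbounded operator. This is standard once the bounded transform $F=D(1+D^2)^{-1/2}$ is Fredholm, which the compact resolvent guarantees.
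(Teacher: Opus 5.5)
Your proposal is correct and follows the same route the paper intends: Theorem \ref{maintheorem} together with naturality of $Ch$ forces the class into $H_0(X,\Q)$, and the degree-zero part is the graded index (your pushforward-to-a-point argument just makes this explicit). That index is the Euler characteristic of the $L^2$-cohomology of $\mathcal{L}^\bullet_{min/max}X$, which is identified with $\chi(IH^*_{\overline{m}/\underline{m}}(X))$ via the Cheeger/Bei isomorphism.
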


\begin{corollary}
    Let $X$ be a connected compact oriented Thom–Mather stratified  Witt pseudomanifold, then $d_{min}=d_{max}$ hence $D_{rel}=D_{abs}$ and we denote  $[D^{dR}_{X}]=[D^{dR}_{X,abs}]=[D^{dR}_{X,rel}]$, then
    \[Ch([D^{dR}_{X}])=index([D^{dR}_{X}])=I\chi(X)\]
    where $I\chi(X)$ is the  Euler characteristic of intersection cohomology of $X$.
\end{corollary}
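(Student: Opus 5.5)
The statement to prove is the last corollary: for a Witt space, $Ch([D^{dR}_X]) = index([D^{dR}_X]) = I\chi(X)$. The plan is to deduce it from Theorem \ref{maintheorem}, the naturality of the Chern character, and the identification of the index with the Euler characteristic of $L^2$-cohomology.

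First, the Witt condition gives $d_{min}=d_{max}$, as noted after Theorem \ref{definingclassKhomology}. Hence $D_{rel}=D_{abs}$, and the two classes coincide. By Theorem \ref{maintheorem}, this common class equals $\iota_*(k)$ for some $k\in\Z\cong K^{an}_0(pt)$, where $\iota:pt\to X$ is the inclusion of a point.

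Next, the Chern character is natural with respect to the inclusion $\iota$, and on a point it is the identity $\Z\to\Q=H_0(pt,\Q)$. Therefore
\[Ch([D^{dR}_X])=\iota_*(k)=k[pt]\in H_0(X,\Q).\]
In particular all components of positive degree vanish, so $Ch=Ch_0$.

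To identify $k$, push forward along the collapse map $c:X\to pt$. Since $c\circ\iota=\mathrm{id}_{pt}$, we get $c_*[D^{dR}_X]=k$. On the other hand, $c_*$ of an analytic $K$-homology class represented by a graded Fredholm operator is its graded index. The operator $D=d_{max}+d_{max}^*$ comes from a Fredholm Hilbert complex (compact resolvent is part of Theorem \ref{definingclassKhomology}). So Hodge theory for Hilbert complexes identifies $\ker D$ in each degree with the $L^2$-cohomology, and
\[k=\sum_i(-1)^i\dim H^i(\mathcal L^\bullet_{max}X).\]

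Finally, in the oriented case Bei's theorem identifies $H^*(\mathcal L_{max}^\bullet X)$ with $IH^*_{\underline m}(X)$. For Witt spaces the lower- and upper-middle intersection cohomologies agree, so this is the intersection cohomology of $X$, and $k=I\chi(X)$.

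There is no substantive obstacle once Theorem \ref{maintheorem} is granted. The only points needing care are the identification $c_*=\mathrm{index}$ and the Hodge decomposition for the closed extension, both of which are standard.
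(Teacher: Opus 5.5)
Your proposal is correct and follows the route the paper indicates. Theorem \ref{maintheorem} concentrates $Ch$ in $H_0$ as the graded index; that index is the Euler characteristic of the $L^2$-cohomology of the Fredholm Hilbert complex; and Bei's (Cheeger's) isomorphism together with $d_{min}=d_{max}$ identifies it with $I\chi(X)$. Your use of the collapse map $c_*$ to pin down the integer $k$ is a careful spelling-out of the paper's identification $Ch_0=\mathrm{index}$.
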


For the proof of Theorem \ref{maintheorem} we follow Rosenberg \cite{R} in the stratified setting.  In loc. cit., Rosenberg considers a deformation of the de Rham operator $D_{t}:=D+t A$ where $A$ is a zero-order operator defined by Clifford multiplication by a vector field. This idea goes back to Witten \cite{W} and is known as Witten deformation. When the vector field is the gradient of a real-valued function $f$ the deformation takes the form, 
 \[d_t:=e^{-tf}de^{tf}=d+tdf\wedge\]
   \[D_t:=d_t+d^*_t=(d+d^*)+t(df\wedge+\iota_{\nabla f})=D+t\hat{c}(df)\]
   where  $t>0$ is a parameter.

 The point of this deformation is that when $f$ satisfies some boundedness assumptions the de Rham complex of (minimal and maximal extensions of) $d_t$ is isomorphic to that of $d$, and the K-homology class of $D_t$ is the same as that of $D$, but as $t\rightarrow \infty$, $D_t$ develops a growing spectral gap away from the critical points.   We choose $f$ to be  a \textit{stratified Morse} function (Definition \ref{stratmorsedef}) in the sense of \cite{AC}, which satisfies the required boundedness assumptions.
 
 The relation between the Euler characteristic of a manifold and a Morse function is old and goes back to Morse \cite{M}; Morse established the inequalities relating the numbers of critical points of each index to the Betti numbers.

Morse theory in the stratified setting was originally considered by Goresky--MacPherson \cite{GM} and later studied by others including \cite{AC}. Different conditions can be imposed on a stratified Morse function. In this paper we follow the work of Álvarez López--Calaza \cite{AC}, where they use the term \textit{rel-Morse functions}; these are different from those considered in \cite{GM}. Such a stratified Morse function restricts to a smooth function on $X_{reg}$, with bounded derivatives and Hessians, and has isolated critical points. Witten deformation of the de Rham operator by a stratified Morse function was studied by Ursula Ludwig and Álvarez López--Calaza; see \cite{L,AC}.

The steps of the proof of Theorem \ref{maintheorem}  are as follows. Using the spectral gap of $D_{t,rel/abs}$ away from the critical points, we show that the restriction of $[D^{dR}_{X,rel/abs}]$ to the complement of a union of sufficiently small compact stratified neighborhoods of critical points vanishes. Then by the long exact sequence of K-homology, the class $[D^{dR}_{X,rel/abs}]$ comes from the neighborhoods, which can be chosen to be contractible.

For completeness we mention that the local contribution of (rel-)critical points to Euler characteristic is given explicitly in \cite[Theorem~1.2]{AC} as 
\[\chi(\mathcal{L}^{\bullet}_{min/max}X)=\sum_{r\in \Z} (-1)^r \left( \sum_{x\in~ critical(f)~in~\hat{X}_{reg}} \nu^r_{x,min/max}\right)\]
 where $\nu^r_{x,min/max}$
is a number which can be expressed in terms of the \(L^2\)-Betti numbers of the links appearing in the local model of  $f$ around the rel-critical point $x$ (see \cite[Def.~4.8]{AC}). Here $\hat{X}_{reg}$ is the metric completion of $X_{reg}$ and rel-critical points of $f$,  in the sense of loc. cit.,  are points in this space, but there is a finite surjective continuous map $\hat{X}_{reg} \rightarrow X$, and by the critical points of $f$ on $X$ we mean the images of the rel-critical points under this map.

\medskip
{\bf Acknowledgments.}  I would like to thank Pierre Albin for patiently answering my various questions over the course of the past year.

\section{de Rham operators}
Let $X$ be a compact  Thom–Mather stratified pseudomanifold (e.g., see \cite{ALMP} for definitions of these notions in detail). Let $X_{\mathrm{reg}}$ denote the dense regular part, the union of the top-dimensional strata. Note that we don't require the Witt condition. In this paper  we equip the regular part $X_{reg}$ of $X$ with an \textit{adapted} metric $g$.

\begin{definition} \cite{AC, BHS} \label{adapteddef} Let $X$ be a Thom–Mather stratified pseudomanifold.
An adapted metric $g$ on $X_{reg}$ is defined inductively on the depth of the stratification of $X$. If depth $X$ = 0, then $X$ is a
manifold, and any Riemannian metric on $X$ is called adapted. A neighborhood of  a point $x\in X\setminus X_{reg}$ is of the form $V=B\times C_{<\epsilon}(L)$ centered at $x$ where $B\cong \R^n$ is an open ball ($n$ is the dimension of the stratum that contains $x$), and $L$ is a stratified space of lower dimension (link), 
then $X_{reg}\cap V$  is of the form $B\times L_{reg} \times (0,\epsilon)$ where $L_{reg}$ is the regular part of $L$. On this neighborhood
$g$ is quasi isometric to the model  metric
\[g_{\R^n}+r^2g_{L}+dr^2\]
where $g_{\R^n}$ is the Euclidean metric on $B$ , $g_L$ is an adapted metric on $L_{reg}$, and $r$ is the radial coordinate on the cone.
\end{definition}
For the existence of adapted metrics on Thom-Mather stratified spaces see \cite{BHS}.

\begin{remark}
A metric quasi-isometric to a piecewise flat conic metric, in the sense of \cite{Ch}, and an iterated edge (wedge) metric  in the sense of \cite{ALMP} is an  adapted metric.
\end{remark}

\begin{lemma}\cite{AC,BHS}
    If $X$ is compact, any two adapted metrics on $X_{reg}$ are (globally) quasi-isometric.
\end{lemma}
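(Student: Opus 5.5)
The plan is to build a global quasi-isometry by induction on the depth of $X$, patching together the local quasi-isometries to the model metric from Definition \ref{adapteddef}, and using compactness to make all constants uniform. Let $g,g'$ be two adapted metrics on $X_{reg}$. The claim to prove is that there is a constant $C\ge 1$ with $C^{-1}g\le g'\le Cg$ on all of $X_{reg}$. It suffices to produce, for each $x\in X$, an open neighborhood $U_x$ and a constant $C_x$ such that the inequality holds on $U_x\cap X_{reg}$. Indeed, by compactness finitely many $U_{x_1},\dots,U_{x_k}$ cover $X$, and $C=\max_i C_{x_i}$ works globally. The point is that the comparison $C^{-1}g\le g'\le Cg$ is a pointwise inequality of quadratic forms, so no partition of unity is needed; one only needs a finite cover.

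The local step proceeds by cases on the point $x$.

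\emph{Regular points.} If $x\in X_{reg}$, take $U_x$ to be a relatively compact coordinate ball inside $X_{reg}$. There both $g$ and $g'$ are smooth, positive definite, and continuous up to the closure. The ratio of the two metrics is therefore bounded above and below on the unit sphere bundle over $\overline{U_x}$.

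\emph{Singular points.} If $x\in X\setminus X_{reg}$, take the chart $V=B\times C_{<\epsilon}(L)$ from Definition \ref{adapteddef}. By definition, $g$ is quasi-isometric on $V\cap X_{reg}$ to $g_{\R^n}+r^2g_L+dr^2$ for some adapted metric $g_L$ on $L_{reg}$. Likewise, $g'$ is quasi-isometric to $g_{\R^n}+r^2g'_L+dr^2$ for some possibly different adapted metric $g'_L$. Two subtleties arise here:
\begin{itemize}
\item the charts used for $g$ and $g'$ may differ;
\item the link metrics $g_L,g'_L$ need not coincide.
\end{itemize}
For the first, shrink to a common smaller neighborhood. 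The Thom--Mather control data identify the two cone structures up to a stratified homeomorphism that is smooth on regular parts and compatible with the radial and tangential projections. I would check that such a change of chart distorts the model metric only by bounded factors near $x$, after shrinking $B$ and $\epsilon$. For the second, $L$ is a compact stratified pseudomanifold of strictly smaller depth. By the induction hypothesis, $g_L$ and $g'_L$ are quasi-isometric with some constant $C_L$. Hence $r^2g_L$ and $r^2g'_L$ are quasi-isometric with the same constant, and so are the two model metrics, the $g_{\R^n}$ and $dr^2$ summands being identical. Composing the three quasi-isometries $g\sim\text{model}\sim\text{model}'\sim g'$ gives $C_x$ on $U_x=V$, possibly shrunk.

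The base case, depth $0$, is exactly the regular-point argument applied globally on the compact manifold $X$.

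I expect the main obstacle to be the chart-compatibility point: showing that the model metric built from one Thom--Mather chart is quasi-isometric to the one built from another chart at the same point. The first thing I would try is the following. Change of charts is realized by control-data-preserving maps, so the radial functions satisfy $r'/r\in[c,c^{-1}]$, and the derivatives in the $B$ and $L$ directions are bounded in the model norms. Boundedness of these derivatives near the compact closure of a slightly smaller chart would again follow from a compactness argument over $\overline{B}\times L\times[0,\epsilon/2]$, using the inductive structure of $L$. If this becomes too technical, I would fall back on the convention of \cite{BHS}, where adapted metrics are defined with respect to a fixed system of control data. In that setting the same charts serve both metrics, and only the induction on the link is needed.
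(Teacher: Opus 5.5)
The paper gives no argument of its own here beyond citing \cite{AC,BHS}. Your proof is correct and is the standard argument behind that citation, once adapted metrics are understood relative to a fixed atlas of conical charts: a finite cover by compactness, trivial comparison at regular points, and at singular points comparison of both metrics with model metrics on a common chart, with $g_L$ versus $g'_L$ handled by induction on depth. That convention is forced rather than a fallback, and your compactness argument over $\overline{B}\times L\times[0,\epsilon/2]$ for comparing two different conical charts cannot work: the transition is smooth only for $r>0$ on the regular part, Thom--Mather control data do not fix the cone structure, and arbitrary stratified homeomorphisms make the lemma false. Integrating the controlled radial field $\partial_r+r^{-2}Y$, with $Y\neq 0$ on $L$, sends $\partial_r$ to a vector of model length $\bigl(1+r^{-2}|Y|^2_{g_L}\bigr)^{1/2}$, and $(b,y,r)\mapsto(b,y,r^2)$ pulls $g_{\R^n}+r^2g_L+dr^2$ back to $g_{\R^n}+r^4g_L+4r^2dr^2$.
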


\begin{remark}
Any two  adapted metrics
are homotopic, in fact the space of such metrics is convex;  if $g_0,g_1$ are such metrics then for $0\leq t\le 1, tg_0+(1-t)g_1$ is also  an adapted metric.  
\end{remark}

We give precise definitions of de Rham operators. 
Let $\Omega^*(X_{reg})$ be the (complex-valued) smooth differential forms on $X_{reg}$ with arbitrary support. Let $$\mathcal{H}:=L^2(\Omega^*(X_{reg}))$$ be  the Hilbert space of $L^2$ differential forms on the possibly incomplete Riemannian manifold $(X_{reg},g)$. The de Rham differential $d$ is an unbounded operator initially defined on $\Omega_{c}^*(X_{reg})\subset \mathcal{H}$, the smooth differential forms with compact support on $X_{reg}$.
 Similarly we have codifferential $\delta$ which is the formal adjoint of $d$. Then the minimal domain is defined as 
\begin{align*}
    Dom({d}_{min})= \overline{\Omega_{c}^*(X_{reg})}
    = \{w\in \mathcal{H}|\exists w_j\in \Omega_{c}^*(X_{reg}), \eta \in \mathcal{H}, s.t.~ w_j\rightarrow w, dw_j\rightarrow \eta, in~ \mathcal{H}\}
\end{align*}
where $ \overline{\Omega_{c}^*(X_{reg})}$ denotes the graph closure, and in this case $d_{min}w:=\eta$.
The maximal domain is defined as
\begin{align*}
    Dom({d}_{max})=&\{w\in \mathcal{H}|dw \in \mathcal{H}~ distributionally\} \\
    =&\{w\in \mathcal{H}|\exists \eta \in \mathcal{H},~ \langle \eta,\ \gamma \rangle=\langle w,\delta\gamma\rangle ~ \forall \gamma \in \Omega_{c}^*(X_{reg}) \}
\end{align*}
in this case $d_{max}w:=\eta$. We have $Dom(d_{min})\subset Dom(d_{max})$,
and in general these domains are not equal. One can also define ${\delta_{min}}, \delta_{max}$ similarly. 

The following properties, which are invariant under quasi-isometries, hold for any adapted metric. These are essentially proved by Cheeger but also see \cite{AC,Be,ALMP}. See \cite[Thm.~1.1]{AC} for $(5)$ in the case of adapted metric.

\begin{theorem}\cite{Ch,Ch1,AC}\label{proptiesanalytic}
 Let $X$ be compact Thom–Mather stratified  pseudomanifold endowed with an adapted metric. Then
\begin{enumerate}
    
    \item 
\[(d_{max})^*={\delta}_{min}\]
\[(d_{min})^*={\delta}_{max}\]

    \item  If $X$ satisfies the Witt condition then $d_{min}=d_{max}$, i.e., $Dom(d_{min})= Dom(d_{max})$.
\item  The operators
    \[D_{rel}:=d_{min}+(d_{min})^*=d_{min}+\delta_{max}\]
\[Dom(D_{rel})=Dom(d_{min})\cap Dom(\delta_{max})\]
\[D_{abs}:=d_{max}+(d_{max})^*=d_{max}+\delta_{min}\]
\[Dom(D_{abs})=Dom(d_{max})\cap Dom(\delta_{min})\]
are self-adjoint. \\

\item As operators on their domain endowed with the graph norm, $D_{rel},D_{abs}$ are Fredholm.\\

\item  $D_{rel}$ and $D_{abs}$ have discrete spectra with finite multiplicities.\\

\end{enumerate}
\end{theorem}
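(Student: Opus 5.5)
The plan splits the statement into the purely functional-analytic parts (1), (3), (4) and the genuinely geometric parts (2), (5). The geometric parts are handled by induction on depth, using the local cone models of Definition \ref{adapteddef}. First I reduce to a model metric. The domains $Dom(d_{min})$ and $Dom(d_{max})$, as well as compactness of inclusions, are unchanged under quasi-isometry, since the $L^2$ norms on forms change to equivalent norms. By the lemma that any two adapted metrics are globally quasi-isometric, it therefore suffices to work, near each singular point, with the exact model $g_{\R^n}+r^2g_L+dr^2$ on $B\times C_{<\epsilon}(L)$. Away from the singular set a partition of unity reduces everything to compact pieces of a smooth manifold.

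For (1) no geometry is needed. By definition $d_{max}=(\delta|_{\Omega^*_c})^*$, so $(d_{max})^*$ is the closure of $\delta|_{\Omega^*_c}$, which is $\delta_{min}$. Similarly, $d_{min}$ is the closure of $d|_{\Omega_c^*}$, so $(d_{min})^*=(d|_{\Omega^*_c})^*=\delta_{max}$.

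For (3) I use the standard Hilbert complex argument. Let $\mathsf d$ be either closed extension; it satisfies $\mathsf d^2=0$ with $\mathrm{ran}\,\mathsf d\subset Dom\,\mathsf d$. The weak Hodge decomposition
\[\mathcal H=\ker \mathsf d\cap\ker\mathsf d^*\oplus\overline{\mathrm{ran}\,\mathsf d}\oplus\overline{\mathrm{ran}\,\mathsf d^*}\]
shows that $\mathsf d+\mathsf d^*$ on $Dom\,\mathsf d\cap Dom\,\mathsf d^*$ is closed and symmetric, and that $(\mathsf d+\mathsf d^*)^2=\mathsf d\mathsf d^*+\mathsf d^*\mathsf d$ is the nonnegative self-adjoint Gaffney Laplacian. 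From this, self-adjointness of $\mathsf d+\mathsf d^*$ follows, as in Brüning--Lesch. Part (4) is then a consequence of (5): if the spectrum is discrete with finite multiplicities, then $0$ is an isolated eigenvalue of finite multiplicity and $D$ has closed range, so $D\colon Dom(D)\to\mathcal H$ with the graph norm is Fredholm.

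The heart of the argument is (5). It is equivalent to showing that the inclusion $Dom(D_{rel/abs})\hookrightarrow\mathcal H$ with the graph norm is compact, i.e.\ that $D$ has compact resolvent. By the partition of unity, compactness is local. On regular pieces it is Rellich's lemma. On a model $B\times C_{<\epsilon}(L)$, I separate variables. By induction on depth, the link operator $D_L$ (with the corresponding ideal boundary condition) has discrete spectrum, so forms decompose along its eigenforms. The radial operator then becomes a family of Bessel-type ODEs in $r$ with indicial roots determined by the eigenvalues $\mu$ of $D_L$. One shows that the graph norm controls $\|r^{-1}\omega\|$ away from the finitely many small $\mu$, which gives a Hardy-type inequality. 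Combined with $|\mu|\to\infty$, this yields uniform smallness of the mass of bounded-graph-norm families near $r=0$ and in high link modes, and hence compactness. The Euclidean factor $B$ is harmless because it is bounded and the metric is a product there.

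I expect this uniform estimate across the link spectrum, carried through the induction, to be the main obstacle. I would follow Cheeger's and Álvarez López--Calaza's treatment, quoting \cite[Thm.~1.1]{AC} for the adapted-metric case. Part (2) uses the same local analysis. On a cone, $d_{min}\neq d_{max}$ can occur only because of the modes of the link in degree $\tfrac12\dim L$ with $\mu=0$, i.e.\ $L^2$-harmonic forms of the link in middle degree. These span $H^{\dim L/2}$ of the link complex, which vanishes by the Witt condition. Inductively on depth, this means the two ideal boundary conditions agree locally, and hence globally by the partition of unity.
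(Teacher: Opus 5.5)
The paper does not prove this theorem; it quotes it from Cheeger and \cite{AC}, taking (5) for adapted metrics from \cite[Thm.~1.1]{AC} and remarking that these properties are quasi-isometry invariant. Your (1), (3) and (4) are correct, and they supply the standard Hilbert-complex facts the paper leaves implicit. In (3), $\|(\mathsf d+\mathsf d^*)u\|^2=\|\mathsf du\|^2+\|\mathsf d^*u\|^2$ gives closedness, and surjectivity of $(\mathsf d+\mathsf d^*)^2+1$ forces $\ker(D^*\mp i)=0$. For (2) and (5) you end up relying on the same sources as the paper. However, two steps of your own sketch are not right as written.

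\textbf{The reduction to the exact model.} A quasi-isometry preserves $Dom(d_{min})$ and $Dom(d_{max})$, but not $Dom(D_{rel/abs})$. Suppose $\langle u,v\rangle_{g'}=\langle Tu,v\rangle_{g}$. Then the $g'$-adjoint of $\mathsf d$ is $T^{-1}\mathsf d^{*}T$. For adapted metrics, $T$ is bounded with bounded inverse but has no control on its derivatives near the singular set. Hence $Dom(\mathsf d)\cap T^{-1}Dom(\mathsf d^*)$ need not equal $Dom(\mathsf d)\cap Dom(\mathsf d^*)$, and your claim that ``compactness of inclusions is unchanged'' does not follow from equivalence of the $L^2$ norms. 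Discreteness is nonetheless quasi-isometry invariant, but only through the complex. By the weak Hodge decomposition, $D$ has compact resolvent iff
\begin{itemize}
\item[(a)] $\ker\mathsf d/\overline{\operatorname{ran}\mathsf d}$ is finite dimensional, and
\item[(b)] the map $Dom(\mathsf d)\to\mathcal H/\ker\mathsf d$ is compact, with the graph norm on the source.
\end{itemize}
These conditions depend only on $\mathsf d$ and the topology of $\mathcal H$. The criterion is global, since it involves $\ker\mathsf d$. So it does not combine with your partition of unity to replace $g$ by $g_{\R^n}+r^2g_L+dr^2$ piece by piece. Moreover, an adapted metric is only locally quasi-isometric to that model. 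Making the reduction rigorous is real work, and that is exactly what quoting \cite[Thm.~1.1]{AC} supplies. Your Bessel/Hardy analysis by itself covers only exact conic products.

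\textbf{The Witt step in (2).} For $\dim L$ even, the separation of variables correctly shows that the obstruction to $d_{min}=d_{max}$ on the cone is the middle-degree $L^2$-cohomology of the link, $H^{\dim L/2}(\mathcal{L}^{\bullet}_{max}L)$. The Witt condition, however, is the vanishing of middle-degree \emph{intersection} homology of the links. To pass from one to the other you need Cheeger's $L^2$-de Rham isomorphism for $L$ with its adapted metric (see \cite[Thm.~4]{Be}). It must be applied inductively to the links, which are again Witt. That isomorphism is a separate deep input, not a consequence of the local analysis, and your argument should invoke it explicitly.
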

\begin{remark} In \cite{AC}, the notation $D_{min/max}$ is  used instead of $D_{rel/abs}$, we avoid this notation because it may be confusing; $D_{rel}$ need not coincide with the minimal extension of $D=d+\delta$ originally defined on $\Omega_{c}^*(X_{reg})$, and $D_{abs}$ need not coincide with the maximal extension of $D$. 
\end{remark}

\begin{remark}
    For an arbitrary adapted metric, $D$ need not be essentially self-adjoint, i.e., minimal and maximal extensions of $D$ may not coincide,  even in the Witt case, as essential self-adjointness is not quasi-isometric invariant. There exists an adapted metric, such as an \textit{adapted rigid iterated edge metric} in the sense of \cite[Prop.~5.4]{ALMP} in the Witt case which is obtained by rescaling of the link metric of a wedge metric that satisfies a link spectral gap condition, such that $D$ becomes essential self-adjoint \cite[Thm.~1.1]{ALMP}, hence the self-adjoint extension is unique.
\end{remark}

\subsection{Kasparov cycles and  $K$-homology }

The unbounded formalism for the Kasparov KK-group \cite{BJ} is as follows. Let $X$ be a second-countable locally compact Hausdorff space.

\begin{definition}\label{kkdef}
    An even unbounded $K$-cycle representing a class in  $K_0^{an}(X)=KK_0(C_0(X),\C)$ consists of a tuple $(\mathcal{H},\Gamma, \rho_X, \bD)$
where $\mathcal{H}$ is a separable  $\mathbb{Z}_2$ graded Hilbert space graded by $\Gamma$ such that $\Gamma^2=1, \Gamma^*=\Gamma$, and $\rho_X:C_0(X)\rightarrow \mathcal{B}(\mathcal{H})$ is an even $\ast$ representation and $\bD:Dom(\bD)\subset\mathcal{H}\rightarrow\mathcal{H}$ is an odd  self-adjoint unbounded operator with respect to $\Gamma$ such that 
\begin{enumerate}
    \item There exists a dense $\ast$-subalgebra $A\subset C_0(X)$ such that $Dom(\bD)$ is invariant under multiplication by elements of $A$ and $[\bD,\rho_X(f)]$ extends to a bounded operator for every $f\in A$.
    \item For every $f\in C_0(X),$ \[\rho_X(f)(1+\bD^2)^{-1}\] is compact.
\end{enumerate}
When $X$ is compact and $\rho_X$ is unital, choosing $f=1\in C_0(X)$, the second condition implies and is equivalent to that $\bD$ has compact resolvent, which is equivalent to $\bD$ having purely discrete spectrum with finite multiplicities. 
\end{definition}

The bounded description for the Kasparov K-homology is as follows:
\begin{definition}\label{kkboundeddef}
An even  bounded $K$-cycle  representing a class in $K_0^{an}(X)=KK_0(C_0(X),\C)$ consists of a tuple
$   (\mathcal{H},\Gamma,\rho_X,F)
   $
where $\mathcal{H}$ is a separable $\mathbb{Z}_2$-graded Hilbert space graded by $\Gamma$ such that $\Gamma^2=1, \Gamma^*=\Gamma$, $\rho_X:C_0(X)\rightarrow \mathcal{B}(\mathcal{H})$ is an even $\ast$-representation, and $F\in\mathcal{B}(\mathcal{H})$ is an odd bounded operator with respect to $\Gamma$ such that there is a dense $\ast$-subalgebra $A\subset C_0(X)$ such that for every $f\in A$,
\begin{enumerate}

\item $[F,\rho_X(f)]$ is compact.
\item $(F-F^\ast)\rho_X(f)$ is compact.
\item $(F^2-1)\rho_X(f)$ is compact.
\end{enumerate}
If $F$ is self-adjoint, the second condition is automatic. 
\end{definition}

Given an unbounded cycle $(\mathcal{H},\Gamma, \rho_X, \bD)$ one obtains a bounded representative of the same $K$-homology class by taking the
bounded transform
\[
F_{\bD}:=\bD(1+\bD^2)^{-1/2}.
\]
Thus
\[
(\mathcal{H},\Gamma,\rho_X,F_{\bD})
\]
is a bounded $K$-cycle and
\[
[\bD]
=
\left[
\mathcal{H},\Gamma,\rho_X,
\bD(1+\bD^2)^{-1/2}
\right]
\in K_0^{an}(X)
\]

\subsection{Proof of Theorem \ref{definingclassKhomology}}
\begin{proof}
We use the unbounded description of $K$-homology and follow the method of \cite{MW} and \cite[Thm.~6.2]{ALMP}.  Let $\bD=D_{rel}$ or $D_{abs}$.
Here $\mathcal{H}=L^2(\Omega^*(X_{reg}))$.
$\Gamma=\epsilon, \epsilon|_{\Omega^i}=(-1)^i$ so $\bD$ is an odd operator, and $\rho_X$ is the natural representation by multiplication of $C(X)$. For a dense subalgebra $A\subset C(X)$, one can take the subalgebra of Lipschitz functions with respect to the given metric structure on $X$. Multiplication by such functions preserves $Dom(\bD)$ (see the proof of Lemma \ref{lemm1}), and for any Lipschitz function $f$, the commutator $[\bD,f]$ is Clifford multiplication by $df$, which exists almost everywhere and is bounded on $X$. Compactness of the resolvent of $\bD$ follows from Theorem \ref{proptiesanalytic} (5). Invariance under the choice of adapted metric follows
from the abstract operatorial homotopy argument of
\cite[Secs.~6.2--6.3]{Hi}. For two adapted metrics $g_0,g_1$ let  $g_s=(1-s)g_0+sg_1$, $s\in[0,1]$. Since any two
adapted metrics are globally quasi-isometric, the
Hilbert spaces
$\mathcal H_s=L^2(\Omega^*(X_{reg}),g_s)$
can be identified as vector spaces with uniformly
equivalent norms. Moreover, their inner products
vary continuously in operator norm in the sense
of \cite[Sec.~6.2]{Hi}.
The domains of $d_{min}$ and $d_{max}$ are
independent of $s$, by the definitions of these
extensions and equivalence of the corresponding
graph norms. Fix $d=d_{min}$ or $d=d_{max}$.
Then $d$ is a fixed densely defined closed operator
satisfying $d^2=0$.
Let $d_s^*$ denote its adjoint with respect to the
inner product induced by $g_s$, and set
$\bD_s=d+d_s^*$.
By \cite[Prop.~6.1]{Hi}, the family
$s\mapsto(\bD_s+i)^{-1}$ is norm-continuous.
Furthermore, the resolvents are compact by
Theorem \ref{proptiesanalytic}, and the commutators
$[\bD_s,f]$ form a bounded continuous family for
Lipschitz functions $f$ on $X$.
Therefore, the construction of
\cite[Prop.~6.4]{Hi}, with the fixed parity grading,
gives an unbounded operatorial homotopy between
the corresponding Kasparov cycles. Consequently,
$[\bD_0]=[\bD_1]$ in $K^{an}_0(X)$.
\end{proof}

For simplicity of notation, we alternate between $[D^{dR}_{X,abs}],[D^{dR}_{X,rel}]$ and $[D]$ when it is clear from the context which operator we are talking about. We also sometimes write $C(X)$ instead of $\rho_X$ as all the representations of $K$-cycles in this paper are  pointwise multiplication by continuous functions $C(X)$.

\section{Witten deformation of de Rham operator}

Let $f$ be a real valued smooth function on $X_{reg}$ with bounded derivative $|df|_g\leq C$.
On $\Omega_c^*(X_{reg})\subset L^2(\Omega^*(X_{reg}))$ consider unbounded  operators 
\[d_t:=e^{-tf}de^{tf}=d+tdf\wedge\] 
\[\delta_t:=e^{tf}\delta e^{-tf}=\delta+t\iota_{\nabla f} \]
where $t>0$ is a parameter. Note that  $df\wedge, \iota_{\nabla f}$ are zero-order operators which are bounded by the boundedness assumption on $df$; therefore their closures are themselves with maximal domain simply $L^2(\Omega^*(X_{reg}))$. Thus minimal and maximal extensions are 
\[({d_{t}})_{min/max}={d}_{min/max}+tdf\wedge\]
\[({\delta_{t}})_{max/min}=(({d_{t}})_{min/max})^*=(d_{min/max})^*+t\iota_{\nabla f}\]
Define
\[A:=df\wedge+\iota_{\nabla f}\]
\[D_{t, rel/abs}:=({d_{t}})_{min/max}+(({d_{t}})_{min/max})^*=(d_{min/max}+(d_{min/max})^*)+tA=D_{rel/abs}+tA\]
therefore
\[Dom(D_{t,rel/abs})=Dom(D_{rel/abs})\]
Note that $[A,g]=0$ for functions $g\in C(X)$ and $A^*=A$.  $A$ is odd with respect to the parity grading, so $D_{t,rel/abs}$ is still an odd operator. 
It's clear that $D_{t,rel/abs}$ defines a class in $K$-homology $[D_{t,rel/abs}]$ just like $D_{rel/abs}$, since $A$ is bounded and of zero order; it doesn't affect the defining analytic conditions of a $K$-cycle.

\begin{theorem} For every $t>0$
   \[[D_{t,rel/abs}]=[D_{rel/abs}]\] 
   \textit{i.e.,  Witten deformation of the de Rham operator by a  smooth function with bounded derivative does not change the $K$-homology class.}
\end{theorem}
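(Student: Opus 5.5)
The plan is to join $D_{rel/abs}$ to $D_{t,rel/abs}$ by the straight-line family $s\mapsto \bD_s:=D_{rel/abs}+stA$, $s\in[0,1]$, and to check that this family is an operatorial homotopy of unbounded $K$-cycles in the sense of \cite[Secs.~6.2--6.3]{Hi}. This is the same abstract mechanism used in the proof of Theorem \ref{definingclassKhomology}. Because $A$ is bounded, odd and self-adjoint, and $Dom(\bD_s)=Dom(D_{rel/abs})$ for every $s$, each $\bD_s$ is an odd self-adjoint operator (Kato--Rellich with a bounded perturbation). So the whole family lives on one Hilbert space $\mathcal H=L^2(\Omega^*(X_{reg}))$, with the fixed grading $\epsilon$ and the fixed representation of $C(X)$.

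First I verify that each $\bD_s$ defines a $K$-cycle. The Lipschitz functions $g$ preserve $Dom(\bD_s)=Dom(D_{rel/abs})$. The commutator is $[\bD_s,g]=[D_{rel/abs},g]+st[A,g]=[D_{rel/abs},g]$, since $A$ commutes with multiplication by functions. Hence these commutators are bounded and independent of $s$. For compactness of the resolvent, use the second resolvent identity
\[
(\bD_s+i)^{-1}=(D_{rel/abs}+i)^{-1}-(\bD_s+i)^{-1}\,stA\,(D_{rel/abs}+i)^{-1}.
\]
The right-hand side is compact because $(D_{rel/abs}+i)^{-1}$ is compact by Theorem \ref{proptiesanalytic} (5) and the other factors are bounded.

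Next I check continuity of the family. The same identity applied to $\bD_s$ and $\bD_{s'}$ gives
\[
\|(\bD_s+i)^{-1}-(\bD_{s'}+i)^{-1}\|\le |s-s'|\,t\,\|A\|,
\]
since $\|(\bD_s+i)^{-1}\|\le 1$ for self-adjoint $\bD_s$. Thus the resolvents are norm-continuous in $s$. Together with the constant commutators, this puts us in the setting of \cite[Prop.~6.4]{Hi}, which yields $[\bD_0]=[\bD_1]$, i.e.\ $[D_{rel/abs}]=[D_{t,rel/abs}]$.

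Alternatively, one can pass to bounded transforms. The key estimate is that $F_{\bD_s}=\bD_s(1+\bD_s^2)^{-1/2}$ depends norm-continuously on $s$, because bounded perturbations change the bounded transform continuously in norm. The conditions of Definition \ref{kkboundeddef} then hold uniformly, giving an operator homotopy. In fact $F_{\bD_1}-F_{\bD_0}$ is compact, since the resolvents are compact, so the two bounded cycles even differ by a compact perturbation.

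The only point needing care, and the one I would treat as the main obstacle, is this norm continuity of the bounded transform (or, in the unbounded route, fitting the family into the hypotheses of Hilsum's proposition). I would handle it with the integral formula
\[
F_{\bD}=\frac{2}{\pi}\int_0^\infty \bD\,(1+\lambda^2+\bD^2)^{-1}\,d\lambda
\]
together with resolvent estimates. Otherwise I would simply cite the standard fact that bounded perturbations of unbounded cycles do not change the $KK$-class.
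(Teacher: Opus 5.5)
Your proposal is correct and takes essentially the paper's route: the paper also uses the straight-line family $D+stA$, $0\le s\le 1$, on the fixed domain $Dom(D_{rel/abs})$, and simply asserts that boundedness of the zero-order operator $A$ makes it a homotopy of $K$-cycles. Your Kato--Rellich step, the resolvent-identity arguments for compactness and norm-continuity of $(\bD_s+i)^{-1}$, the constant commutators, and the observation that $F_{\bD_1}-F_{\bD_0}$ is compact supply exactly the details the paper leaves implicit, and all of them check out.
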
 
\begin{proof} Let $ D= D_{rel}$ or $D_{abs}$.
    Consider the homotopy
\[D_s=D+stA,\quad 0\leq s\leq 1\]
Then $Dom(D_s)=Dom(D)$. Since $A$ is a bounded zero-order operator, $D_s$ defines a homotopy of $K$-cycles between $D$ and $D_t$.
\end{proof}

\subsection{Stratified Morse function}

From now on we assume $f$ is a \textit{stratified Morse function}.

\begin{definition}\cite{AC}\label{stratmorsedef} Let $X$ be a compact   Thom–Mather stratified pseudomanifold and fix an adapted metric on $X_{reg}$.
    A continuous function $f\in C(X)$ is called a stratified Morse function if
    
\begin{enumerate}
    \item the restriction of $f$ to $X_{reg}$ is a \textit{rel-Morse} function in the sense of \cite{AC}; it's a smooth function such that $f, |df|, Hess(f)$ are all bounded (with respect to the chosen adapted metric), and    there exists a specific local quadratic normal form near every \textit{rel-critical} point.
     \item the \textit{rel-critical}
     points of $f$ are isolated.
\end{enumerate}
\end{definition}
We don't use the local quadratic normal form in this paper, so we refer to \cite{AC} for the precise definition. In the sense of \cite{AC}, a rel-critical point of $f$ is a point $x$ in the metric completion  $\hat{X}_{reg}$ of $X_{reg}$, defined by $$\liminf_{y\rightarrow x, y\in X_{reg}}|df(y)|=0$$
By \cite[Remark~9]{AC}, there is a finite continuous surjective map $\hat{X}_{reg}\rightarrow X$, so we consider the images of rel-critical points of $f$ on $X$; they are isolated points on $X$, and we simply call them the critical points of $f$ on $X$.

By  \cite[Proposition 4.9]{AC},  the subset of functions in $C^{\infty}(X_{reg})$ with continuous extensions to $X$ whose restrictions to all strata of $X$ are rel-Morse functions (with respect to some adapted metric) is dense in the sense of weak $C^{\infty}$ topology. Therefore, stratified Morse functions exist with respect to some adapted metric. Since the $K$-homology class of the Euler characteristic operator is independent of the choice of adapted metric, we may choose and fix an adapted metric for which a stratified Morse function exists.

Boundedness of $Hess(f)$ implies $|df|$ is Lipschitz on $X_{reg}$ and therefore it has a unique continuous extension to its metric completion $\hat{X}_{reg}$, then the set of rel-critical points is the zero set of this extension, hence closed, thus the number of rel-critical points is finite since $\hat{X}_{reg}$ is compact.  Critical points do not necessarily lie in $X_{reg}$, and they can be on lower-dimensional strata. Let $\{p_i\}^k_{i=1}$ be the set of critical points. Choose compact, contractible, sufficiently small distinguished neighborhoods $N_{\epsilon}(p_i)$ such that $p_i$ are in the interiors and they are pairwise disjoint. This is possible by the local picture of a Thom–Mather stratified space; for a point $x\in X$ there is a distinguished open neighborhood
\[B\times c_{<\epsilon}(L)\]
centered at $x$, where $B$ is an open ball and $c_{<\epsilon}(L)$ is a truncated open cone within $\epsilon$ distance of the cone vertex over the link $L$. Now one can take a smaller closed neighborhood $N_{\epsilon'}(x)$ such as
\[\overline{B'}\times c_{\leq\epsilon'}(L)\]
for a smaller ball $\overline{B'}\subset B$ and $\epsilon'<\epsilon$.
Let \[U=X\setminus \bigcup N_{\epsilon}(p_i)\]
then $U$ is an open stratified subset of $X$ where its regular part is 
\[U_{reg}=X_{reg}\setminus \bigcup N_{\epsilon}(p_i)\]

\subsection{Spectral gap away from critical points}
We discuss the main point of Witten deformation, namely the spectral gap away from the critical points.
Consider the Laplacians
\[\Delta_{t,rel/abs}=D^2_{t,rel/abs}\]

For all $t>0$, we have \cite[\S18]{AC}
\begin{equation}
Dom(\Delta_{t,rel/abs})=Dom(\Delta_{rel/abs})
\end{equation}
\begin{equation}\label{idenityhess}
\Delta_{t,rel/abs}=\Delta_{rel/abs}+tHess(f)+t^2|df|^2
\end{equation}
where $Hess(f)$ denotes the endomorphism of differential forms induced by the Riemannian Hessian.
Recall that   $ess~supp(s)\subset U_{reg}$ for a section  $s\in L^2(\Omega^*(X_{reg}))$  means $s=0$ almost everywhere on $X_{reg}\setminus U_{reg}$. Note that in general
\[Dom(\Delta_{rel/abs})=Dom(D_{rel/abs}^2)\not = Dom(D_{rel/abs})\]

\begin{remark}
\textbf{Notation}: For simplicity, throughout this paper we use $\|\cdot\|$ for Hilbert-space and operator norms,
and $|\cdot|$ for pointwise tensor norms.
 For example, for $s\in\mathcal{H}$, $\|s\|$ denotes its $L^2$ norm, while for a bounded operator $F$, $\|F\|$ denotes its operator norm.
\end{remark}

\begin{theorem}\label{specgapmain}(\cite[\S~18.2]{AC}) 
    There exists $C>0, t_0>0$ such that for any section $s\in Dom(D)$  with $ess~supp(s)\subset U_{reg}$ and $t>t_0$
\[||D_{t,rel/abs}s||^2 \geq Ct||s||^2\]
\end{theorem}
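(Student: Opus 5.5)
The plan is to write $\|D_{t}s\|^2$ as a quadratic form and use the Bochner-type identity \eqref{idenityhess}. The key point is that on $U_{reg}$ the function $|df|$ is bounded below by a positive constant. Indeed, $|df|$ extends continuously to the compact completion $\hat X_{reg}$, and its zero set consists exactly of the rel-critical points. These map to the $p_i$, which lie in the interiors of the $N_\epsilon(p_i)$. The preimage in $\hat X_{reg}$ of the closure of $U$ is compact and avoids the zero set, so there is $c>0$ with $|df|^2\ge c$ on $U_{reg}$. We also set $K:=\sup|Hess(f)|<\infty$, which is finite by Definition \ref{stratmorsedef}.

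For $s\in Dom(D)$ we would like to write
\[
\|D_t s\|^2=\langle \Delta_t s,s\rangle=\langle \Delta s,s\rangle+t\langle Hess(f)s,s\rangle+t^2\langle |df|^2 s,s\rangle .
\]
This only makes literal sense for $s\in Dom(\Delta)$, so we instead expand directly, using $D_t=D+tA$ with $A$ bounded and self-adjoint:
\[
\|D_ts\|^2=\|Ds\|^2+t\langle (DA+AD)s,s\rangle+t^2\|As\|^2 .
\]
Pointwise $A^2=|df|^2$, since $A$ is Clifford multiplication by $df$. Hence for $s$ essentially supported in $U_{reg}$ we get $\|As\|^2=\int|df|^2|s|^2\ge c\|s\|^2$.

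The cross term $DA+AD$ is, on smooth compactly supported forms, the zero-order operator $Hess(f)$ of \eqref{idenityhess}. This is exactly the content of that identity, since $(D+tA)^2=D^2+t(DA+AD)+t^2A^2$. Thus $|\langle (DA+AD)s,s\rangle|\le K\|s\|^2$ on a core. We then extend the identity $\langle Ds,As\rangle+\langle As,Ds\rangle=\langle Hess(f)s,s\rangle$ to all of $Dom(D)$ by a graph-norm approximation argument. Both sides are continuous in the graph norm of $D$, because $A$ and $Hess(f)$ are bounded. It therefore suffices to check the identity on a graph-norm core of $D_{rel/abs}$.

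The main obstacle is this core issue, because $\Omega^*_c(X_{reg})$ need not be a core for $D_{abs}$. To get around it, we use $Dom(\Delta)$, which is a core for $D$ since $D$ is self-adjoint. On $Dom(\Delta)=Dom(\Delta_t)$ the identity \eqref{idenityhess} is available from \cite[\S18]{AC}. Comparing it with $\Delta_t=D_t^2$ gives $DA+AD=Hess(f)$ there. Alternatively, one can verify directly that multiplication by $df$ preserves the minimal and maximal domains, since $f$ has bounded Hessian and $df\wedge$ commutes with $d$ up to sign. The identity then holds in $Dom(D)$ by density.

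Combining, for $s\in Dom(D)$ with $ess~supp(s)\subset U_{reg}$,
\[
\|D_ts\|^2\ge \|Ds\|^2-tK\|s\|^2+t^2c\|s\|^2\ge (ct^2-Kt)\|s\|^2 .
\]
This is at least $Ct\|s\|^2$, for instance with $C=c/2$, once $t>t_0:=2K/c$. In fact this yields the stronger bound of order $t^2$, which certainly implies the stated estimate.
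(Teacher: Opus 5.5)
Your argument is correct and is essentially the paper's proof. You expand $\|D_ts\|^2$ using \eqref{idenityhess} on $Dom(\Delta)$, extend the identity to $Dom(D)$ by graph-norm density, and then use $|df|\ge$ const and $\|Hess(f)\|\le$ const on $U_{reg}$; your isolation of the cross term $2\,\mathrm{Re}\langle Ds,As\rangle$ and your compactness argument in $\hat X_{reg}$ for the lower bound on $|df|$ make explicit what the paper leaves brief.

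One small slip in the constants: for $t>2K/c$ you get $\|D_ts\|^2\ge \tfrac{c}{2}t^2\|s\|^2$, which dominates $\tfrac{c}{2}t\|s\|^2$ only when $t\ge 1$, so take $t_0=\max(1,2K/c)$.
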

\begin{proof}
    For simplicity we drop $rel/abs$ from the notation of $D_{rel/abs}$. For any $s\in Dom(\Delta)$, the identity (\ref{idenityhess}) implies
    \begin{align}\label{ident1}
        ||D_ts||^2=\langle D_ts,D_ts\rangle=\langle \Delta_ts,s\rangle=||Ds||^2+t \langle Hess(f) s,s\rangle +t^2\langle |df|^2 ~s,s\rangle
    \end{align}
    Since $U_{reg}$ is away from the set of critical points (note that $U_{reg}$ is away from critical points from an $\epsilon$-distance), there is a lower bound $m>0$ for $|df|$ on $U_{reg}$. Since $Hess(f)$ is bounded there is $H_0$ such that $||Hess(f)||<H_0$, therefore for $s\in Dom(\Delta)$ with $ess~supp(s)\subset U_{reg}$ we have
    \[||D_ts||^2\geq ||Ds||^2-tH_0||s||^2+t^2m^2||s||^2=||Ds||^2+t||s||^2(-H_0+tm^2)\]
for large $t$, $-H_0+tm^2>C>0$.

    Since $D$ is self-adjoint, $D^2=\Delta$, $Dom(\Delta)$ is dense in $Dom(D)$ in the graph norm. Combined with boundedness of $D_t-D,|df|, Hess(f)$, this implies that the identity (\ref{ident1}) extends continuously to $Dom(D)$,  and the above inequality holds as well.
\end{proof}

See \cite[Proposition 6.24]{J} for this result in the case of a wedge metric.

\section{Analytic estimates}
In this section we prove the estimates needed to prove the vanishing of the restricted $K$-homology class from Theorem \ref{specgapmain}. We drop $rel/abs$ from $D_{rel/abs}$ and $D$ represents any of them.

\begin{lemma}\label{lemm1}
     For $g\in C(X)$ with $supp(g)\subset U$ we have as $t\rightarrow \infty$
    \begin{equation}\label{gboundedgoes0}
        ||g(1+D^2_t)^{\frac{-1}{2}}||\rightarrow 0
    \end{equation}
\end{lemma}
\begin{proof}
    
    For simplicity of notation, let $B_t=(1+D^2_t)^{\frac{-1}{2}}$.
    We have
    \[||B_t||\leq 1,\quad ||D_tB_t||\leq 1\]
    Note that $Dom(B_t)=L^2(\Omega^*(X_{reg}))$.
    Therefore it suffices to show  (\ref{gboundedgoes0}) for Lipschitz functions supported in $U$, because such functions are uniformly dense in the class of continuous functions supported in $U$.
 So assume $g\in Lip(X)$ with $supp(g)\subset U$.

    Let $u\in L^2(\Omega^*(X_{reg}))$, let $v_t:=B_tu$.  $D_tB_t$ is a bounded operator on $L^2(\Omega^*(X_{reg}))$  and $v_t\in Dom(D_t)$. Then 
    \begin{equation} \label{normuv}
            ||v_t||\leq ||u||,\quad ||D_tv_t||=||D_tB_tu||\leq ||u||
    \end{equation}

Recall $D_t=D+tA$ and $[A,g]=0, [D_t,g]=[D,g]$. Moreover $[D,g]$ is bounded as a multiplication operator. Indeed, for a section $s\in \Omega_{c}^*(X_{reg})$
\[d(gs)=gds+dg\wedge s\]
\[\delta(gs)=g\delta s-\iota_{\nabla g}s\]
$$[D,g]s=dg \wedge s -\iota_{\nabla g}s$$
 since $g$ is Lipschitz $dg$ is bounded (note that $dg, \nabla g$ make sense since a Lipschitz function is differentiable almost everywhere on $X_{reg}$). By taking graph-norm approximating sequence and distribution definition, this implies $[D,g]$ is bounded and $g$ preserves domains of $d_{min},d_{max},\delta_{min},\delta_{max}$, hence of  $Dom(D_t)=Dom(D)$ (note that $D=D_{rel}$ or $D_{abs}$). 
 Consider $gv_t=gB_tu$, then since $v_t\in Dom(D)$, $gv_t\in Dom (D_t)$. Then $ess~supp (gv_t)\subset supp(g) \cap X_{reg}\subset  U_{reg}$; therefore by Theorem \ref{specgapmain} we have
\[||gv_t||\leq \frac{||D_t(gv_t)||}{\sqrt{Ct}}\]
We have
\[D_t(gv_t)=gD_tv_t+[D_t,g]v_t=gD_tv_t+[D,g]v_t\]
By (\ref{normuv})
\[||D_t(gv_t)||\leq (||g||_{\infty} + ||[D,g]||)||u||\]
Thus
\begin{equation}
     ||g(1+D^2_t)^{\frac{-1}{2}}u||=||gB_tu||=||gv_t||\leq \frac{(||g||_{\infty} + ||[D,g]||)||u||}{\sqrt{Ct}} 
\end{equation}
So $||g(1+D^2_t)^{\frac{-1}{2}}||\leq \frac{c}{\sqrt{Ct}}$ for some positive constants $C,c$ independent of $t$. The rest is clear.

\end{proof}

Now consider the bounded transform

\[h(x)=\frac{x}{\sqrt{1+x^2}}\]
\[F_t:=h(D_t)=D_tB_t=D_t(1+D^2_t)^{\frac{-1}{2}}\] 
\[1-F^2_t=B^2_t\]
This is the bounded operator corresponding to $D_t$ in the bounded description of $K$-homology.

Let $|A|=(A^2)^{1/2}=|df|$ and
$\widetilde{A}=\frac{A}{|A|}=\frac{df\wedge+\iota_{\nabla f}}{|df|}$.
This is a well-defined self-adjoint operator of operator norm at most $1$ on $\mathcal{H}=L^2(\Omega^*(X_{reg}))$, since the points where $df=0$ have measure zero. It is first defined on the dense subset $\Omega^*_{c}(X_{reg})$ and then extended uniquely to a bounded operator on $\mathcal{H}$.

\begin{lemma}\label{lemma3}As $t\rightarrow \infty$
    \[F_t\rightarrow \widetilde{A}\]
    in the strong operator topology on $L^2(\Omega^*(X_{reg}))$
\end{lemma}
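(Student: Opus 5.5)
The plan is to rescale. Put $T_t:=t^{-1}D_t=A+t^{-1}D$, which is self-adjoint on $Dom(D)$, and write
\[F_t=h(D_t)=h(tT_t)=h_t(T_t),\qquad h_t(y):=\frac{y}{\sqrt{t^{-2}+y^2}}.\]
As $t\to\infty$, $h_t(y)\to \mathrm{sign}(y)$ pointwise, and uniformly on $\{|y|\geq\delta\}$ for each $\delta>0$. On the limit side, $A$ is a bounded self-adjoint multiplication-type operator with $A^2=|df|^2$. At each point of $X_{reg}$ where $df\neq 0$, $A$ has eigenvalues $\pm|df|$, so $\mathrm{sign}(A)=\widetilde A$. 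The proof then has two ingredients: strong resolvent convergence $T_t\to A$, and a way around the discontinuity of $\mathrm{sign}$ at $0$.

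\emph{Step 1 (reduction to a dense set).} Since $\|F_t\|\leq 1$ and $\|\widetilde A\|\leq 1$ uniformly in $t$, it suffices to prove $F_ts\to\widetilde As$ for $s$ in a dense subset of $\mathcal H$. I take $s\in\Omega^*_c(X_{reg})$ supported in a set where $|df|\geq m>0$ for some $m=m(s)$. These forms are dense because the zero set of $df$ in $X_{reg}$ has measure zero, as the paper already used.

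\emph{Step 2 (strong resolvent convergence).} For $u\in Dom(D)$, a common core for all $T_t$ and for the bounded operator $A$, we have $(T_t-A)u=t^{-1}Du\to 0$. By the standard criterion (Reed--Simon, Thm.~VIII.25(a)), $T_t\to A$ in the strong resolvent sense. Hence $\varphi(T_t)\to\varphi(A)$ strongly for every bounded continuous $\varphi$ on $\R$ (Reed--Simon, Thm.~VIII.20). I expect this to be the main technical point, and the only place where unbounded-operator care is needed. It is routine once one notes that $Dom(D)$ is a common core.

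\emph{Step 3 (handling the discontinuity).} Fix $\delta<m/2$. Let $\varphi_\delta$ be continuous and odd, with $\varphi_\delta=\mathrm{sign}$ outside $(-\delta,\delta)$ and $|\varphi_\delta|\leq 1$. Let $\psi$ be continuous with values in $[0,1]$, equal to $1$ on $[-\delta,\delta]$ and supported in $(-2\delta,2\delta)$. Since $|h_t-\varphi_\delta|\leq 2$ everywhere, we have $|h_t-\varphi_\delta|\leq \eta_t\cdot 1+2\psi$ with $\eta_t:=\sup_{|y|\geq\delta}|h_t(y)-\varphi_\delta(y)|\to 0$. The spectral theorem then gives
\[\|(F_t-\varphi_\delta(T_t))s\|\leq \eta_t\|s\|+2\|\psi(T_t)s\|,\]
using $\|\chi_{(-\delta,\delta)}(T_t)s\|\leq\|\psi(T_t)s\|$ for the part of the estimate near $0$. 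By Step 2, $\psi(T_t)s\to\psi(A)s$ and $\varphi_\delta(T_t)s\to\varphi_\delta(A)s$. Since $A$ acts pointwise on $s$ with eigenvalues $\pm|df|$ and $|df|\geq m>2\delta$ on $\mathrm{supp}(s)$, we get $\psi(A)s=0$ and $\varphi_\delta(A)s=\mathrm{sign}(A)s=\widetilde As$. Combining these, $\limsup_{t\to\infty}\|F_ts-\widetilde As\|=0$, and Step 1 finishes the proof.
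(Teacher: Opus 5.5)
Your proof is correct, and it takes a genuinely different route from the paper's. The paper works on the same dense set $\sR$ but argues quantitatively. It writes $h$ as an integral of resolvents and compares $R(r,D_t)w$ with $R(r,tA)w$ through the resolvent identity $R(r,D_t)w-R(r,tA)w=-R(r,D_t)DR(r,tA)w$. It then bounds $\|DR(r,tA)w\|\le C_w/\max(t,|r|)$ by writing $R(r,tA)w$ explicitly as a smooth compactly supported form. That step needs $C^1$ bounds on $Q_{t,r}$ uniform in $t,r$, hence smoothness of $w$, the lower bound $|df|\ge l$ on $\mathrm{supp}(w)$, and the bound on $\nabla^2 f$. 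Integrating in $\lambda$ gives $\|(F_t-h(tA))w\|=O(\log t/t)$, and $h(tA)w\to\widetilde A w$ is then checked by hand.

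You instead rescale to $T_t=A+t^{-1}D$ and get strong resolvent convergence $T_t\to A$ from the soft common-core criterion. This applies because $Dom(D)=Dom(T_t)$ for every $t$, and any dense subspace is a core for the bounded operator $A$. You then absorb the jump of $\mathrm{sign}$ at $0$ with the continuous approximant $\varphi_\delta$ and the cutoff $\psi$, using that the pointwise eigenvalues $\pm|df|$ of $A$ on $\mathrm{supp}(s)$ avoid $(-2\delta,2\delta)$.

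Your argument uses no derivative information on $f$ or on $s$. It in fact proves the abstract statement that $h(D+tA)\to\mathrm{sign}(A)$ strongly for any self-adjoint $D$ and bounded self-adjoint $A$ with $\ker A=0$. The geometry enters only through the nullity of $\{df=0\}$ and the pointwise identity $\mathrm{sign}(A)=\widetilde A$. What the paper's route buys in return is an explicit convergence rate on $\sR$. That rate is not needed later, since the standard homotopy only uses strong continuity of $t\mapsto F_t$ at $t=\infty$.
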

\begin{proof}
The argument of \cite{R} does not apply directly as we don't have a spectral gap on all of the domain of $D_t$, just away from critical points.\\
Consider smooth differential forms with compact support on $X_{reg}$ away from the finitely many critical points of $f$ on $X$, i.e.,
$$\sR:=\Omega^*_{c}(X_{reg}\setminus \{df=0\} )\subset Dom(D)\subset L^2(\Omega^*(X_{reg}))$$
$\sR\subset L^2(\Omega^*(X_{reg}))$ is dense, since 
\[||F_t||\leq 1,~ ||\widetilde{A}||\leq1\]
it suffices to show the strong convergence for a section $w\in \sR$; 
\begin{equation}\label{FTAGOES0}
    ||(F_t-\widetilde{A})w||\rightarrow 0; \quad t\rightarrow \infty
\end{equation}
Let
\[K=supp (w) \Subset (X_{reg}\setminus \{df=0\})\]
There exists $0<l$ such that
\begin{equation}\label{bounddf}
  |df|_{K}\geq l>0
\end{equation}
 It's easier if we make use of resolvents of the operators $D_t,A$. Denote, for $r\in \R\setminus\{0\}$

\[R(r,D_t):=(D_t+ir)^{-1}\]
\[R(r,tA)=(tA+ir)^{-1}\]
Define
\[\sA_t:=h(tA)=tA(1+(tA)^2)^{-1/2}\]
We have the identity 
\begin{equation}\label{identity}
    h(x)=\frac{x}{\sqrt{1+x^2}}=\frac{1}{\pi}\int^{\infty}_0 (\frac{1}{x+i\sqrt{1+\lambda^2}}+\frac{1}{x-i\sqrt{1+\lambda^2}})d\lambda
\end{equation}
The spectral theorem and functional calculus  for self-adjoint operators  \cite[\S5]{S} imply (see also \cite[p.~164]{B})
\begin{equation}\label{integralF_t}
    F_tw=h(D_t)w=\frac{1}{\pi}\int^{\infty}_0 \left(R(\sqrt{1+\lambda^2},D_t)+R(-\sqrt{1+\lambda^2},D_t)\right)w d\lambda
\end{equation}

\[\sA_tw=h(tA)w=\frac{1}{\pi}\int^{\infty}_0 \left(R(\sqrt{1+\lambda^2},tA)+R(-\sqrt{1+\lambda^2},tA)\right)w d\lambda\]
(more precisely, for a bounded Borel measurable function $b$ and a self-adjoint operator $D$, $b(D)=\int_{\R}b(\lambda)dE(\lambda)$, where $E$ is the spectral measure. Then Fubini implies that if $h_u(x)=\int^u_0 k(x,\lambda)d\lambda$ and $k$ is uniformly bounded, then $h_u(D)=\int^u_0 k(D,\lambda)d\lambda$. Now write the improper integral (\ref{identity}) as $h(x)=\lim_{u\rightarrow \infty}h_u(x)$; then $h_u\rightarrow h$ pointwise and $|h_u|\leq 1$, therefore dominated convergence implies $h(D_t)=\lim_{u\rightarrow \infty} h_u(D_t)$ in the strong operator topology.)

Note that because of the support assumption on $w\in \sR$ we have (see (\ref{descrR}) below and the discussion that follows.)
\[R(r,tA)w\in Dom(D)\]
We have the \textit{resolvent identity} \cite[\S2.2]{S}, which can be obtained by basic algebra: (as $D_t-tA=D$)
\begin{equation}\label{resolvide}
    R(r,D_t)w-R(r,tA)w=- R(r,D_t)DR(r,tA)w
\end{equation}
We will prove that for $r\in \R\setminus\{0\}, t>0$
\begin{equation}\label{mainestimate}
    || R(r,D_t)DR(r,tA)w||\leq \frac{C_w}{|r|\max{(t,|r|)}}
\end{equation}
for some constant $0<C_w$ depending on $w$.
Assume (\ref{mainestimate}); then 
\[|| R(r,D_t)w-R(r,tA)w||\leq  \frac{C_w}{|r|\max{(t,|r|)}}\]
Then we have
\begin{align*}
    ||(F_t-\sA_t)w||\leq& 2\frac{1}{\pi}\int^{\infty}_0 \frac{C_w}{(\sqrt{1+\lambda^2})\max{(t,\sqrt{1+\lambda^2})}} d\lambda\\
    =&\frac{2}{\pi}\int^{t}_0 \frac{C_w}{(\sqrt{1+\lambda^2})\max{(t,\sqrt{1+\lambda^2})}} d\lambda+\frac{2}{\pi}\int^{\infty}_t \frac{C_w}{(\sqrt{1+\lambda^2})\max{(t,\sqrt{1+\lambda^2})}} d\lambda\\
    \leq& \frac{2}{\pi}\frac{C_w}{t}\int^{t}_0 \frac{1}{\sqrt{1+\lambda^2}}d\lambda+ \frac{2C_w}{\pi}\int^{\infty}_t \frac{1}{{1+\lambda^2}}d\lambda\\
    \leq & \frac{2C_w\log(3t)}{\pi t}+\frac{2C_w}{\pi t} 
\end{align*}
where we used for $t\geq 1$
\[\int^{t}_0 \frac{1}{\sqrt{1+\lambda^2}}d\lambda=\log(t+\sqrt{1+t^2})\leq  \log(3t)\]
\[\int^{\infty}_t \frac{1}{{1+\lambda^2}}d\lambda\leq \int^{\infty}_t \frac{1}{{\lambda^2}}d\lambda=\frac{1}{t}\]
Therefore as $t\rightarrow \infty$
\[||(F_t-\sA_t)w|| \rightarrow 0\]
To prove (\ref{FTAGOES0}) we have
\[ ||(F_t-\widetilde{A})w||\leq ||(F_t-\sA_t)w|| +||(\sA_t-\widetilde{A})w||\]
so we need to show $||(\sA_t-\widetilde{A})w||\rightarrow 0$. Indeed, as $A^2=|df|^2$
\[\sA_t-\widetilde{A}=\frac{tA}{\sqrt{1+t^2A^2}}-\frac{A}{|A|}=\frac{A}{|A|}\left[\frac{t}{\sqrt{\frac{1}{|A|^2}+t^2\frac{A^2}{|A|^2}}}-1 \right]=\frac{A}{|A|}\left[\frac{t}{\sqrt{\frac{1}{|df|^2}+t^2}}-1 \right]\]
we have 
\[|\frac{t}{\sqrt{\frac{1}{|df|^2}+t^2}}-1|\leq\frac{1}{2t^2|df|^2}\]
therefore 

\[||(\sA_t-\widetilde{A})w||\leq ||\frac{1}{2t^2|df|^2} w||\leq \frac{||w||}{2t^2l^2} \xrightarrow{t\rightarrow \infty} 0\]
where we used $||\frac{w}{|df|^2}||\leq \frac{||w||}{l^2}$ which follows from the support assumption of $w$ and (\ref{bounddf}).

Now it remains to prove the estimate (\ref{mainestimate}). Without loss of generality, assume $r>0$; the case $r<0$ is the same. First, we have
\begin{equation}\label{relation1r}
    ||R(r,D_t)||\leq \frac{1}{r}
\end{equation}
   Indeed, by self-adjointness of $D_t$ we have for $u\in Dom (D_t)$
   \[||(D_t+ir)u||^2=||D_tu||^2+r^2||u||^2\geq r^2||u||^2\]
   thus
   \[||u||\leq \frac{1}{|r|}||(D_t+ir)u||\]
   $R(r,D_t)=(D_t+ir)^{-1}$ maps $L^2(\Omega^*(X_{reg}))$ to $Dom(D_t)$, hence for $v\in L^2(\Omega^*(X_{reg}))$
   \[||R(r,D_t)v||\leq \frac{1}{|r|}||(D_t+ir)(D_t+ir)^{-1}v||= \frac{1}{|r|}||v||\]
proving (\ref{relation1r}). We can write

\[R(r,tA)=(tA+ir)^{-1}=\frac{tA-ir}{t^2A^2+r^2}=\frac{tA-ir}{t^2|df|^2+r^2}\]
There are two cases. If $t\leq r$ then $0<t/r\leq1$ and
\begin{equation}\label{descrR}
    R(r,tA)w=\frac{1}{r}\left(\frac{(t/r)A-i}{(t/r)^2|df|^2+1}\right)w
\end{equation}
  Consider $$Q_{t,r}:=\left(\frac{(t/r)A-i}{(t/r)^2|df|^2+1}\right)=\left(\frac{(t/r)(df\wedge+\iota_{\nabla f})-i}{(t/r)^2|df|^2+1}\right)$$
  On $K\Subset X_{reg}$ this is a smooth bounded endomorphism of differential forms, and since $|df|>l>0$ and $df, \nabla f,\nabla^2f$ are all bounded, this implies that 
  \[\sup_{x\in K}||Q_{t,r}(x)|| \leq C, \quad \sup_{x\in K}|\nabla Q_{t,r}(x)|\leq C\]
  for some constant $C$. We have $supp(Dw)\subset supp (w)=K$. This implies
  \begin{equation}\label{qtr}
      ||D(Q_{t,r}w)||\leq C(||w||+||\nabla w||) =: C_w
  \end{equation}
for a constant $C_w$ depending on $w$ and independent of $t,r$ such that $0<t/r\leq1$. Therefore

\[||DR(r,tA)w||=||\frac{1}{r}DQ_{t,r}w||\leq \frac{C_w}{r}=\frac{C_w}{\max(t,r)}\]

   Similarly, if $t\geq  r$ then $0<r/t\leq1$ and

\[R(r,tA)=\frac{1}{t}\left(\frac{A-i(r/t)}{|df|^2+(r/t)^2}\right)=:\frac{1}{t}Q'_{t,r}\]
By the same argument there is a constant $C_w$ independent of $t,r$ satisfying $0<r/t\leq1$ such that
\begin{equation}\label{q'rt}
    ||DR(r,tA)w||=||\frac{1}{t}DQ'_{t,r}w||\leq \frac{C_w}{t}=\frac{C_w}{\max(t,r)}
\end{equation}
Therefore (\ref{relation1r}), (\ref{qtr}), and (\ref{q'rt}
) imply
\[ || R(r,D_t)DR(r,tA)w||\leq \frac{1}{r}\frac{C_w}{\max(t,r)}\]
proving (\ref{mainestimate}).
\end{proof}

\begin{lemma}\label{lemma2}
     For $g\in C(X)$ with $supp(g)\subset U$ we have as $t\rightarrow \infty$
     \begin{equation}
         ||[F_t,g]||\rightarrow 0
     \end{equation}
\end{lemma}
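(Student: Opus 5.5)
We need $\|[F_t,g]\|\to 0$ for $g$ supported in $U$. By density and the uniform bound $\|[F_t,g]\|\le 2\|g\|_\infty$, we may assume $g$ is Lipschitz with $supp(g)\subset U$. The tool is the resolvent integral (\ref{integralF_t}):
\[
[F_t,g]=\frac1\pi\int_0^\infty\left([R(\mu,D_t),g]+[R(-\mu,D_t),g]\right)d\lambda,\qquad \mu=\sqrt{1+\lambda^2}.
\]
This holds in the strong sense and hence gives an operator-norm bound once the integrand is bounded in norm. Since $g$ preserves $Dom(D_t)=Dom(D)$ (proof of Lemma \ref{lemm1}), we have the standard commutator identity
\[
[R(r,D_t),g]=-R(r,D_t)[D_t,g]R(r,D_t)=-R(r,D_t)[D,g]R(r,D_t),
\]
using $[A,g]=0$. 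The crude bound $\|R\|\le 1/|r|$ gives only $\|[D,g]\|/r^2$, which is integrable but not small. We therefore have to gain a factor depending on $t$ from the spectral gap. The key observation is that $[D,g]=dg\wedge-\iota_{\nabla g}$ is supported in $supp(g)\subset U$. Choose a Lipschitz cutoff $\phi\in C(X)$ with $supp(\phi)\subset U$ and $\phi=1$ on $supp(g)$. Then $[D,g]=\phi[D,g]\phi$, which gives
\[
\|[R(r,D_t),g]\|\le \|R(r,D_t)\phi\|\,\|[D,g]\|\,\|\phi R(r,D_t)\|.
\]
Note also that $\|\phi R(r,D_t)\|=\|R(-r,D_t)\phi\|$ by taking adjoints.

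The main step, and the likely obstacle, is a localized resolvent estimate of the form $\|\phi R(r,D_t)\|\le c\,(\sqrt t+|r|)^{-1}$, or at least $\le c\min(1/|r|,1/\sqrt t)$. I would argue as in Lemma \ref{lemm1}. Set $v=R(r,D_t)u$, so that $\|v\|\le\|u\|/|r|$ and $\|D_tv\|\le\|u\|$ (the latter from $\|(D_t+ir)v\|^2=\|D_tv\|^2+r^2\|v\|^2$). Theorem \ref{specgapmain} then gives
\[
\sqrt{Ct}\,\|\phi v\|\le\|D_t(\phi v)\|\le\|\phi\|_\infty\|u\|+\|[D,\phi]\|\,\|u\|/|r|.
\]
This yields $\|\phi R(r,D_t)\|\le c(1+1/|r|)/\sqrt t$. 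Since $|r|=\mu\ge1$ here, this is $\le 2c/\sqrt t$. Combining with the trivial bound, $\|\phi R\|\le \min(1/\mu,\,2c/\sqrt t)$.

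Inserting this into the integral gives
\[
\|[F_t,g]\|\le\frac{2\|[D,g]\|}{\pi}\int_0^\infty \min\Big(\frac1{\mu^2},\frac{4c^2}{t}\Big)d\lambda.
\]
Splitting at $\lambda=\sqrt t$, the part over $[0,\sqrt t]$ is $\le 4c^2/\sqrt t$, and the tail is $\le\int_{\sqrt t}^\infty\lambda^{-2}d\lambda=1/\sqrt t$. So $\|[F_t,g]\|=O(t^{-1/2})\to0$. The delicate points to check are the justification of the commutator identity on domains (that $g$ and $\phi$ preserve $Dom(D)$ and $[D_t,g]$ is bounded, both from Lemma \ref{lemm1}'s proof) and the validity of exchanging the norm with the resolvent integral. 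The latter holds because the integrand is norm-integrable and the integral converges strongly, so the norm bound passes to the limit.
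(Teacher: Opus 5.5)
Your proof is correct and follows the paper's argument: reduce to Lipschitz $g$, use the resolvent integral (\ref{integralF_t}) and the identity $[R(r,D_t),g]=-R(r,D_t)[D,g]R(r,D_t)$, and insert a cutoff equal to $1$ on $supp(g)$ so that the spectral gap of Theorem \ref{specgapmain} makes the localized resolvent small. The differences are minor: the paper gets smallness of $\|C_gR(r,D_t)\|$ by writing $R(r,D_t)=B_tK_r(D_t)$, invoking Lemma \ref{lemm1}, and concluding by dominated convergence, whereas you rederive the localized resolvent bound directly, use the cutoff on both sides, and integrate explicitly, which gives the rate $O(t^{-1/2})$.
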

\begin{proof}
Again by boundedness of $F_t$, it suffices to show this for $g$ a Lipschitz function on $X$. We mentioned in the proof of Lemma \ref{lemm1} that the commutator $$C_g:=[D_t,g]=[D,g]$$ is bounded as a multiplication operator; moreover, it is supported in $supp(g)\subset U$. Consider a Lipschitz cutoff function $\psi$ such that 
\[\psi|_{supp(g)}=1,\quad supp(\psi)\subset U\]
Note that $C_g\psi=\psi C_g=C_g$.
Recall\[B_t=(1+D^2_t)^{\frac{-1}{2}}\]
We can apply Lemma \ref{lemm1} to get
\begin{equation}\label{psiB_t}
    ||\psi B_t||\xrightarrow{t\rightarrow \infty}0
\end{equation}
    We have 
   \[R(r,D_t)=(D_t+ir)^{-1}=B_tK_r(D_t) \]
   \[K_r(x):=\frac{\sqrt{1+x^2}}{x+ir}\]
   for $|r|>1$ we have $|K_r(x)| \leq1$. By the boundedness of $C_g,K_r(D_t)$ and (\ref{psiB_t}) we get
   \begin{equation}\label{commutc_gR}
       ||C_gR(r,D_t)||=||C_g\psi B_tK_r(D_t)||\xrightarrow{t\rightarrow \infty} 0
   \end{equation}
   Consider the following formal identity for an operator $T$
   \[[T^{-1},g]=-T^{-1}[T,g]T^{-1}\]
   Applying this to $T=D_t+ir, T^{-1}=R(r,D_t)$ (which is valid since $g$ preserves $Dom(D_t)$ and $C_g=[D_t,g]$ is bounded) we get
   \begin{equation}\label{comuutreolsventidentity}
       [R(r,D_t),g]=-R(r,D_t)[D_t,g]R(r,D_t)=-R(r,D_t)C_gR(r,D_t)
   \end{equation}
   Using (\ref{integralF_t}) we get (understood as a strong operator integral)
      \begin{align*}
       [F_t,g]=&\frac{1}{\pi}\int^{\infty}_0 \left([R(\sqrt{1+\lambda^2},D_t),g]+[R(-\sqrt{1+\lambda^2},D_t),g]\right) d\lambda
   \end{align*}
   So 
      \begin{align}\label{spectralftg}
       ||[F_t,g]||\leq&\frac{1}{\pi}\int^{\infty}_0 \left(||[R(\sqrt{1+\lambda^2},D_t),g]||+||[R(-\sqrt{1+\lambda^2},D_t),g]||\right) d\lambda
   \end{align}
We will use dominated convergence theorem to show that the integral in (\ref{spectralftg}) goes to zero as $t\rightarrow \infty$.  Recall from (\ref{relation1r})
   \begin{equation}\label{boundednessofR}
       ||R(r,D_t)||\leq \frac{1}{|r|}
   \end{equation}
By (\ref{commutc_gR}), (\ref{comuutreolsventidentity}), and (\ref{boundednessofR}), for a fixed $r$ with $|r|>1$ we have
\begin{equation}\label{comRggoesto0}
    ||[R(r,D_t),g]||\leq ||R(r,D_t)||.||C_gR(r,D_t)||\leq \frac{1}{|r|}||C_gR(r,D_t)||\xrightarrow{t\rightarrow \infty} 0
\end{equation}
thus the integrand in (\ref{spectralftg}) goes to zero pointwise  as $t\rightarrow \infty$. We need to show it's uniformly bounded by an integrable function. Applying (\ref{boundednessofR}) twice to (\ref{comuutreolsventidentity}) we get
   \[||[R(r,D_t),g]||\leq \frac{||C_g||}{r^2}\]
for $r=\pm\sqrt{1+\lambda^2}$
\[||[R(\pm\sqrt{1+\lambda^2},D_t),g]||\leq \frac{||C_g||}{1+\lambda^2}\]
  Since 
\[\int^{\infty}_0\frac{||C_g||}{1+\lambda^2}d\lambda \leq ||C_g|| \int^{\infty}_0\frac{1}{1+\lambda^2}d\lambda <\infty\]
  $ \frac{||C_g||}{1+\lambda^2}$ is integrable.
\end{proof}

\section{Restriction of $K$-homology to $U$}
We continue to drop $rel/abs$ from $D_{rel/abs}$ for simplicity of notation, and $D$ represents any of them.

Let $j:U\subset X$ be the inclusion. Then the extension by zero defines a homomorphism $$j^{\sharp}:C_0(U)\rightarrow C(X)$$ and thus induces a homomorphism \cite[\S~17.8]{B}
$$j^*:K^{an}_0(X)\rightarrow K^{an}_0(U)$$
This can be expressed in terms of the Kasparov product as follows. $j$ defines an element $$[j]\in KK_0(C_0(U),C(X))$$
and for $[x]\in K_0^{an}(X)=KK_0(C(X),\C)$
\[j^*([x])=[j]\otimes [x]\]
$j^*$ can be represented in terms of bounded $K$-cycles  simply by
\[(\mathcal{H},\Gamma, \rho_X, F)\mapsto (\mathcal{H},\Gamma, \rho_U,F)\]
i.e., we restrict the representation of $C(X)$ to $C_0(U)$.

Therefore $$j^*[D]=j^*[D_t]=[(L^2(\Omega^*(X_{reg})),\epsilon,C_0(U),F_t)] \in K^{an}_0(U)$$ 
Recall $\widetilde{A}=\frac{A}{|A|}$
\[\widetilde{A}^2=1,\quad \widetilde{A}^*=\widetilde{A},\quad [\widetilde{A},g]=0, ~ \forall g\in C_0(U)\]
Therefore the $K$-cycle represented by $\widetilde{A}$ is degenerate; thus its class in $K^{an}_0(U)$ vanishes.

\begin{theorem}
    As $t\rightarrow \infty$
\[(L^2(\Omega^*(X_{reg})),\epsilon,C_0(U),F_t) \rightarrow (L^2(\Omega^*(X_{reg})),\epsilon,C_0(U),\widetilde{A})\]
is a homotopy of Kasparov modules which implies $j^*[D]=0$.
\end{theorem}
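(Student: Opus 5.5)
We plan to realize the homotopy as a single bounded Kasparov cycle over the cylinder. Reparametrize $t=\tan(\pi s/2)$, $s\in[0,1)$, and set $G_s:=F_{t(s)}$ for $s<1$ and $G_1:=\widetilde{A}$. Consider the Hilbert $C([0,1])$-module $\mathcal{E}=C([0,1],\mathcal{H})$ with grading $\epsilon$, the representation of $C_0(U)$ by pointwise multiplication, and the operator $G=(G_s)_{s\in[0,1]}$. It suffices to check that $(\mathcal{E},\epsilon,C_0(U),G)$ is a Kasparov $(C_0(U),C([0,1]))$-module. Its evaluations at $s=0$ and $s=1$ are then the cycles of $F_{t(0)}$ (more precisely $F_t$ for any fixed $t$, after concatenating with the homotopy $D+stA$ of the earlier theorem) and of $\widetilde{A}$. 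The latter is degenerate, so $j^*[D]=j^*[D_t]=0$.

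\textbf{Step 1: $G$ is an adjointable operator on $\mathcal{E}$.} On $[0,1)$, $s\mapsto F_{t(s)}$ is norm continuous. Indeed, $D_t=D+tA$ with $A$ bounded, so $t\mapsto (D_t+i)^{-1}$ is norm continuous by the resolvent identity. The bounded transform $h$ is a norm-continuous function of the resolvent for bounded perturbations; this can be read off the integral formula (\ref{integralF_t}) with the bound $\|R(r,D_t)-R(r,D_{t'})\|\le |t-t'|\|A\|/r^2$, which is integrable in $\lambda$. At $s=1$ we only have strong convergence, by Lemma \ref{lemma3}, and the same holds for adjoints since all operators are self-adjoint. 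Strong$^*$ continuity together with the uniform bound $\|G_s\|\le 1$ is exactly what is needed for $G$ to define an adjointable operator on $C([0,1],\mathcal{H})$. It is self-adjoint and odd, so condition (2) of Definition \ref{kkboundeddef} is automatic.

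\textbf{Step 2: compactness conditions.} We need $[G,g]$ and $(G^2-1)g$ to be compact on $\mathcal{E}$, i.e. to lie in $C([0,1],\mathcal{K}(\mathcal{H}))$, for $g$ in a dense subalgebra of $C_0(U)$; we take Lipschitz $g$ with compact support in $U$.
\begin{itemize}
\item For $s<1$, $[F_t,g]$ and $(F_t^2-1)g=-B_t^2g$ are compact, since $D_t$ gives a $K$-cycle, and they are norm continuous in $s$ by Step 1.
\item At $s=1$, $[\widetilde A,g]=0$ and $(\widetilde A^2-1)g=0$. Norm continuity at $s=1$ therefore amounts to $\|[F_t,g]\|\to 0$, which is Lemma \ref{lemma2}, and $\|B_t^2 g\|\le\|gB_t\|\to0$, which is Lemma \ref{lemm1} (after taking adjoints).
\end{itemize}
Thus both families are norm-continuous compact-valued maps on $[0,1]$, as required.

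The main subtlety is Step 1 at the endpoint. The family $G$ is not norm continuous at $s=1$, so one must be careful that the correct notion of operator on $\mathcal{E}$ only requires strong$^*$ continuity, while the compactness conditions require norm continuity only after multiplying by $g$. This is precisely the division of labor supplied by Lemmas \ref{lemm1}, \ref{lemma3}, and \ref{lemma2}. If needed, one can also verify $G$ maps $\mathcal{E}$ into $\mathcal{E}$ directly: for $\xi\in C([0,1],\mathcal{H})$ the function $s\mapsto G_s\xi(s)$ is continuous, by uniform boundedness plus strong continuity.
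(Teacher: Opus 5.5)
Your proposal is correct and is essentially the paper's argument. The paper builds the same standard homotopy on $C([t_0,\infty],L^2(\Omega^*(X_{reg})))$ with $\mathcal{F}_\infty=\widetilde{A}$. It uses Lemma \ref{lemma3} for strong continuity at $\infty$, and Lemmas \ref{lemm1} and \ref{lemma2} for the norm convergence of $(\mathcal{F}_t^2-1)g$ and $[\mathcal{F}_t,g]$ to $0$; you simply reparametrize to $[0,1]$ and spell out the finite-$t$ norm continuity that the paper calls clear (and note that since $t(0)=0$ gives $G_0=F_0=h(D)$, no concatenation with $D+stA$ is actually needed).
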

\begin{proof}
This follows from the above lemmas similar to the proof given in \cite{R,LR}.  More precisely, to construct a \textit{standard homotopy} in the sense of \cite[\S17]{B},  consider the Hilbert $C([t_0,\infty])$-module $$\mathcal{E}=C\left([t_0,\infty],L^2(\Omega^*(X_{reg}))\right)$$ with $\mathcal{F}_t=F_t$ for $t<\infty$ and $\mathcal{F}_\infty=\widetilde{A}$. For $\mathcal{E}$ to define a Kasparov  $C_0(U)-C([t_0,\infty])$ module, we need to show (continuity and compactness at a finite $t$ are clear)

\begin{enumerate}
    \item $t\rightarrow \mathcal{F}_t$ is strong-operator continuous as $t\rightarrow \infty$.
    \item for a $g\in C_0(U)$, $t\rightarrow (\mathcal{F}^2_t-1)g$ is norm continuous as $t\rightarrow \infty$,  which in this case means it should go to zero.
    \item for a $g\in C_0(U)$,  $t\rightarrow [\mathcal{F}_t,g]$ is norm continuous as $t\rightarrow \infty$, which in this case also means it should go to zero.
\end{enumerate}
The first one is Lemma \ref{lemma3}, the second condition follows from the stronger result of Lemma  \ref{lemm1}, and the last one is Lemma  \ref{lemma2}. Lemmas \ref{lemm1}, \ref{lemma2} are stated for a $g\in C(X)$ with $supp(g)\subset U$, so they work for any $g\in C_c(U)$ by considering its  extension by zero. Density of $C_c(U)\subset C_0(U)$ and uniform boundedness of $||F_t||\leq1, ||(1+D^2_t)^{-1/2}||\leq 1$  imply the lemmas hold for $g\in C_0(U)$.
\end{proof}

\subsection{Proof of Theorem \ref{maintheorem}}
\begin{proof}
    We have proved $j^*[D]=0$. If there are no rel-critical points, then $U=X$, so $[D]=0$.
We may therefore assume that the critical set is nonempty. Let $i: \bigcup_r N_{\epsilon}(p_r) \hookrightarrow X$ be the inclusion. There is an exact sequence
\[K^{an}_0(\bigcup_r N_{\epsilon}(p_r))\xrightarrow{i_*} K^{an}_0(X) \xrightarrow{j^*} K^{an}_0(U)\]

$j^*[D]=0$ implies $$[D]\in im(K^{an}_0(\bigcup_r N_{\epsilon}(p_r))\xrightarrow{i_*} K^{an}_0(X))$$
Since each $N_{\epsilon}(p_r)$ is contractible to the critical point $\{p_r\}$ and they are pairwise disjoint, the homotopy invariance of the $K$-homology group implies
\[K^{an}_0(\bigcup_r N_{\epsilon}(p_r))\cong
\bigoplus_r K^{an}_0(\{p_r\})\cong \bigoplus_r \Z \]
Therefore we can write, for some $n_r\in \Z$
\[[D]=\sum_r n_ri_*[p_r]\]
Since $X$ is assumed to be connected (hence path connected as a Thom–Mather pseudomanifold is locally path connected), any two inclusions $\{p_r\}\hookrightarrow X$ and $\{p_s\}\hookrightarrow X$ are homotopic, so $i_*[p_r]=i_*[p_s]$, therefore
\[[D]=(\sum_r n_r)i_*[p_1]\in im(K^{an}_0(\{p_1\})\xrightarrow{} K^{an}_0(X))\]
\end{proof}

\end{document}